 \newtheorem{theorem}{Theorem}[section]
 \newtheorem{corollary}[theorem]{Corollary}
 \newtheorem{lemma}[theorem]{Lemma}
 \theoremstyle{definition}
 \theoremstyle{remark}
 \newtheorem{remark}[theorem]{Remark}
 \numberwithin{equation}{section}
\newcommand{\eps}{\varepsilon}
\newcommand{\C}{\mathbb{C}}
\newcommand{\N}{\mathbb{N}}
\newcommand{\R}{\mathbb{R}}
\newcommand{\T}{\mathbb{T}}
\newcommand{\Z}{\mathbb{Z}}
\newcommand{\cC}{\mathcal{C}}
\newcommand{\cG}{\mathcal{G}}
\newcommand{\cH}{\mathcal{H}}
\newcommand{\cK}{\mathcal{K}}
\newcommand{\cR}{\mathcal{R}}
\newcommand{\diag}{\operatorname{diag}}
\newcommand{\im}{\operatorname{Im}}
\begin{document}
%
%
%
%
%
%
\title[Asymptotics of Toeplitz matrices]
{Asymptotics of Toeplitz Matrices with Symbols in Some Generalized Krein Algebras}

\author[A. Yu. Karlovich]{Alexei Yu. Karlovich}
\address{
Departamento de Matem\'atica\\
Faculdade de Ci\^encias e Tecnologia\\
Universidade Nova de Lisboa\\
Quinta da Torre\\
2829--516 Caparica\\
Portugal}
\email{oyk@fct.unl.pt}
\thanks{This work is partially supported by the grant FCT/FEDER/POCTI/MAT/59972/2004.}

\subjclass[2000]{Primary 47B35; Secondary 15A15, 47B10}
\keywords{Toeplitz matrix, generalized Krein algebra,
Szeg\H{o}-Widom limit theorem, Wiener-Hopf factorization}

\dedicatory{To the memory of Mark Krein (1907--1989)}
\begin{abstract}
Let $\alpha,\beta\in(0,1)$ and
\[
K^{\alpha,\beta}:=\left\{a\in L^\infty(\T):\
\sum_{k=1}^\infty |\widehat{a}(-k)|^2 k^{2\alpha}<\infty,\
\sum_{k=1}^\infty |\widehat{a}(k)|^2 k^{2\beta}<\infty
\right\}.
\]
Mark Krein proved in 1966 that $K^{1/2,1/2}$ forms a Banach algebra.
He also observed that this algebra is important in the asymptotic theory
of finite Toeplitz matrices. Ten years later,  Harold Widom extended
earlier results of Gabor Szeg\H{o} for scalar symbols and
established the asymptotic trace formula
\[
\operatorname{trace}f(T_n(a))=(n+1)G_f(a)+E_f(a)+o(1)
\quad\text{as}\ n\to\infty
\]
for finite Toeplitz matrices $T_n(a)$ with matrix symbols $a\in K^{1/2,1/2}_{N\times N}$.
We show that if $\alpha+\beta\ge 1$ and $a\in K^{\alpha,\beta}_{N\times N}$,
then the Szeg\H{o}-Widom asymptotic trace formula holds with $o(1)$ replaced by
$o(n^{1-\alpha-\beta})$.
\end{abstract}
\maketitle
\section{Introduction and the main result}
For $1\le p\le \infty$, let $L^p:=L^p(\T)$ and $H^p:=H^p(\T)$ be the standard Lebesgue
and Hardy spaces on the unit circle $\T$, respectively. Denote by $\{\widehat{a}(k)\}_{k\in\Z}$
the sequence of the Fourier coefficients of a function $a\in L^1(\T)$,
\[
\widehat{a}(k)=\frac{1}{2\pi}\int_0^{2\pi}a(e^{i\theta})e^{-ik\theta}d\theta
\quad (k\in\Z).
\]
For $\alpha,\beta\in(0,1)$, put
\[
\begin{split}
K^{\alpha,0} &:=\left\{a\in L^\infty(\T):\
\sum_{k=1}^\infty |\widehat{a}(-k)|^2 k^{2\alpha}<\infty,
\right\},
\\
K^{0,\beta} &:=\left\{a\in L^\infty(\T):\
\sum_{k=1}^\infty |\widehat{a}(k)|^2 k^{2\beta}<\infty
\right\},
\\
K^{\alpha,\beta} &:=K^{\alpha,0}\cap K^{0,\beta}.
\end{split}
\]
It was Mark Krein \cite{Krein66} who first discovered that $K^{1/2,1/2}$
forms a Banach algebra under pointwise multiplication and the norm
\[
\|a\|_{1/2,1/2}:=\|a\|_{L^\infty}+
\left(\sum_{k=-\infty}^\infty |\widehat{a}(k)|^2(|k|+1)\right)^{1/2}.
\]
By the same method, one can show that if $\alpha,\beta\in[1/2,1)$, then
$K^{\alpha,0}$ and $K^{0,\beta}$ are Banach
algebras under pointwise multiplication and the norms
\[
\begin{split}
\|a\|_{\alpha,0} &:=\|a\|_{L^\infty}+\left(\sum_{k=0}^\infty
|\widehat{a}(-k)|^2(k+1)^{2\alpha}\right)^{1/2},
\\
\|a\|_{0,\beta} &:=\|a\|_{L^\infty}+\left(\sum_{k=0}^\infty
|\widehat{a}(k)|^2(k+1)^{2\beta}\right)^{1/2},
\end{split}
\]
respectively. Further, if $\max\{\alpha,\beta\}\ge 1/2$, then $K^{\alpha,\beta}$
is a Banach algebra under pointwise multiplication and the norm
\[
\|a\|_{\alpha,\beta} :=\|a\|_{L^\infty}
+\left(\sum_{k=0}^\infty|\widehat{a}(-k)|^2(k+1)^{2\alpha}\right)^{1/2}
+\left(\sum_{k=0}^\infty|\widehat{a}(k)|^2(k+1)^{2\beta}\right)^{1/2}
\]
(see \cite[Chap.~4]{BS83} and also \cite[Sections~10.9--10.11]{BS06} and
\cite[Theorem~1.3]{BKS07}).
In these sources even more general algebras are considered. The algebra $K^{1/2,1/2}$
is referred to as the \textit{Krein algebra}. The algebras $K^{\alpha,0}$, $K^{0,\beta}$,
and $K^{\alpha,\beta}$ will be called \textit{generalized Krein algebras}.

Suppose $N\in\N$. For a Banach space $X$, let $X_N$ and $X_{N\times N}$
be the spaces of vectors and matrices with entries in $X$, respectively.
Let $I$ be the identity operator, $P$ be the Riesz projection of
$L^2$ onto $H^2$, $Q:=I-P$, and define $I,P$, and $Q$ on
$L_N^2$ elementwise. For $a\in L_{N\times N}^\infty$ and $t\in\T$,
put $\widetilde{a}(t):=a(1/t)$ and $(Ja)(t):=t^{-1}\widetilde{a}(t)$.
Define \textit{Toeplitz operators}
\[
T(a):=PaP|\im P,
\quad
T(\widetilde{a}):=JQaQJ|\im P
\]
and \textit{Hankel operators}
\[
H(a):=PaQJ|\im P,
\quad
H(\widetilde{a}):=JQaP|\im P.
\]
The function $a$ is called the \textit{symbol} of $T(a)$, $T(\widetilde{a})$,
$H(a)$, $H(\widetilde{a})$. We are interested in the asymptotic behavior of
\textit{finite block Toeplitz matrices}
\[
T_n(a):=\big(\widehat{a}(j-k)\big)_{j,k=0}^n
\]
generated by (the Fourier coefficients of) the symbol $a$ as $n\to\infty$.
It should be noted that asymptotics of Toeplitz matrices was one of the topics
of Mark Krein's interests. In particular, he proved \cite{Krein66} that
$K^{1/2,1/2}$ is an optimal smoothness class for the validity of the
strong Szeg\H{o} limit theorem for scalar positive symbols (independently
this result was obtained by Devinatz \cite{Devinatz67}; for an extension
of this result to matrix positive definite symbols, see B\"ottcher and
Silbermann \cite{BS80}).
Many results about asymptotic properties of $T_n(a)$ as $n\to\infty$ are contained
in the books by Grenander and Szeg\H{o} \cite{GS58}, B\"ottcher and Silbermann
\cite{BS83,BS99,BS06}, Hagen, Roch, and Silbermann \cite{HRS01}, Simon \cite{Simon05},
and B\"ottcher and Grudsky \cite{BG05}.

Let $\operatorname{sp}A$ denote the spectrum of an operator $A$. If $f$ is an
analytic function in an open neighborhood of $\operatorname{sp}A$, then we will
simply say that $f$ is analytic on $\operatorname{sp}A$. We assume that the reader
is familiar with basics of trace class operators and their operator determinants
(see Gohberg and Krein \cite[Chap.~3 and 4]{GK69} or Section~\ref{sec:Borodin-Okounkov}).
If $A$ is a trace class operator, then $\operatorname{trace} A$ denotes the \textit{trace} of $A$
and $\det (I-A)$ denotes the \textit{operator determinant} of $I-A$.

The following result was proved by Widom \cite[Theorem~6.2]{Widom76}
(see also \cite[Section~10.90]{BS06}).
It extends earlier results by Szeg\H{o} (see \cite{GS58}) and now
it is usually called the Szeg\H{o}-Widom asymptotic trace formula.
\begin{theorem}[Widom] \label{th:Widom}
Let $N\ge 1$. If $a$ belongs to $K_{N\times N}^{1/2,1/2}$ and $f$ is analytic on
$\mathrm{sp}\,T(a)\cup\mathrm{sp}\,T(\widetilde{a})$, then
\begin{equation}\label{eq:Widom}
\operatorname{trace} f(T_n(a))=(n+1)G_f(a)+E_f(a)+o(1)
\quad\text{as }\; n\to\infty,
\end{equation}
where
\begin{align*}
G_f(a)
&:=
\frac{1}{2\pi}\int_0^{2\pi}(\operatorname{trace} f(a))(e^{i\theta})d\theta,
\\
E_f(a)
&:=
\frac{1}{2\pi i}\int_{\partial\Omega}f(\lambda)
\frac{d}{d\lambda}\log\det T[a-\lambda]T[(a-\lambda)^{-1}]d\lambda,
\end{align*}
and $\Omega$ is any bounded open set containing
$\mathrm{sp}\,T(a)\cup\mathrm{sp}\,T(\widetilde{a})$
on the closure of which $f$ is analytic.
\end{theorem}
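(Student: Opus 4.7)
The plan is to derive \eqref{eq:Widom} from the Szeg\H{o}--Widom limit theorem for block Toeplitz determinants via the holomorphic functional calculus. First one must verify that, for all sufficiently large $n$, $\operatorname{sp} T_n(a)\subset\Omega$ and $(\lambda I - T_n(a))^{-1}$ is uniformly bounded for $\lambda\in\partial\Omega$; this is a stability property of the finite-section method for block Toeplitz operators with symbols in $K^{1/2,1/2}_{N\times N}$, and rests on the invertibility of $T(a-\lambda)$ and $T(\widetilde{a}-\lambda)$ throughout $\partial\Omega$. Granted this, the Riesz--Dunford calculus gives
\[
\operatorname{trace} f(T_n(a)) = \frac{1}{2\pi i}\int_{\partial\Omega} f(\lambda)\,\operatorname{trace}(\lambda I - T_n(a))^{-1}\,d\lambda,
\]
and Jacobi's identity $\operatorname{trace}(\lambda I-T_n(a))^{-1} = \frac{d}{d\lambda}\log\det(\lambda I - T_n(a))$ reduces the problem to the asymptotics, uniform in $\lambda\in\partial\Omega$, of the scalar quantity $\det T_n(a-\lambda)$.

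The core ingredient is the Szeg\H{o}--Widom limit theorem
\[
\det T_n(b) = G(b)^{n+1}\, E(b)\,\bigl(1+o(1)\bigr)\qquad (n\to\infty)
\]
applied to $b=a-\lambda$, where $G(b) := \exp\bigl(\tfrac{1}{2\pi}\int_0^{2\pi}\log\det b(e^{i\theta})\,d\theta\bigr)$ and $E(b) := \det T(b)T(b^{-1})$. To prove it I would start from a canonical right Wiener--Hopf factorization $b = u_-\,u_+$ with $u_\pm^{\pm 1}$ in the corresponding one-sided subalgebras of $K^{1/2,1/2}_{N\times N}$ (such a factorization exists precisely when $T(b)$ and $T(\widetilde{b})$ are invertible) and invoke the Borodin--Okounkov--Case--Geronimo identity, which expresses $\det T_n(b)/[G(b)^{n+1} E(b)]$ as a Fredholm determinant $\det(I - K_n^b)$ with $\|K_n^b\|_1\to 0$ as $n\to\infty$. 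The condition $b\in K^{1/2,1/2}_{N\times N}$ enters exactly here: it is equivalent to the Hankel operators $H(b)$ and $H(\widetilde{b})$ being Hilbert--Schmidt, which is what makes $E(b)$ a well-defined Fredholm determinant and $K_n^b$ a trace-class operator.

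The main obstacle is upgrading this asymptotic to one uniform in $\lambda\in\partial\Omega$. One needs the factorization $a - \lambda = u_-^\lambda u_+^\lambda$ to depend continuously in the Krein-algebra norm on $\lambda$, so that $K_n^{a-\lambda}$ is a continuous family of trace-class operators; combined with the strong convergence of the truncation projection to zero, this forces $\|K_n^{a-\lambda}\|_1 \to 0$ uniformly in $\lambda\in\partial\Omega$. Once the uniform expansion
\[
\log\det T_n(a-\lambda) = (n+1)\log G(a-\lambda) + \log E(a-\lambda) + o(1)
\]
is in hand, a Vitali-type argument (the error is the logarithm of a holomorphic, uniformly-close-to-$1$ function, so Cauchy estimates control its derivative) propagates the $o(1)$ to the $\lambda$-derivative. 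Substituting into the contour integral and interchanging the order of integration, Fubini together with Cauchy's formula applied pointwise in $\theta$ yields
\[
\frac{1}{2\pi i}\int_{\partial\Omega} f(\lambda)\frac{d}{d\lambda}\log G(a-\lambda)\,d\lambda = \frac{1}{2\pi}\int_0^{2\pi}\operatorname{trace} f\bigl(a(e^{i\theta})\bigr)\,d\theta = G_f(a),
\]
while the analogous $E$-integral matches $E_f(a)$ verbatim with the definition in the statement, and the uniform $o(1)$ survives integration over the compact contour $\partial\Omega$, giving \eqref{eq:Widom}.
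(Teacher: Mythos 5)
This theorem is not proved in the paper: it is cited from Widom \cite{Widom76} and \cite[Section~10.90]{BS06}, and the paper's own contribution (Theorem~\ref{th:main}) is the quantitative extension to $K^{\alpha,\beta}$. Your proposed route --- functional calculus plus Jacobi's identity to reduce to $\det T_n(a-\lambda)$, Borodin--Okounkov to rewrite that as $G(a-\lambda)^{n+1}\det T[a-\lambda]T[(a-\lambda)^{-1}]\det(I-K_n^\lambda)$, a uniform-in-$\lambda$ estimate showing $\|K_n^\lambda\|_{\cC_1}\to 0$, then differentiate--multiply--integrate --- is exactly the strategy the paper carries out for its generalization in Section~\ref{sec:proof}, and when specialized to $\alpha=\beta=1/2$ it is the ``modern'' proof of Widom's theorem. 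It is worth knowing, though, that this is \emph{not} Widom's original proof: the Borodin--Okounkov formula postdates \cite{Widom76} by over two decades, and Widom's own argument was a direct operator-theoretic computation with Hankel and Toeplitz commutators. So your sketch is a correct alternative proof rather than a reconstruction of Widom's.

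Two technical points deserve attention. First, the Borodin--Okounkov formula requires \emph{two} factorizations, $a-\lambda=u_-u_+=v_+v_-$, since $b=v_-u_+^{-1}$ and $c=u_-^{-1}v_+$; you only mention the right one. Both exist because both $T(a-\lambda)$ and $T[(a-\lambda)\widetilde{\hspace{2mm}}]$ are invertible on $\partial\Omega$, which gives invertibility of $T[(a-\lambda)^{-1}]$ as well. Second, and more substantively: the paper's factorization machinery (Lemma~\ref{le:fact-property}) requires $\max\{\alpha,\beta\}>1/2$, which \emph{excludes} the Krein algebra $K^{1/2,1/2}$; the paper's proof explicitly sets that case aside and defers to Theorem~\ref{th:Widom}. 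So if you want your sketch to be self-contained you must invoke the factorization property of $K^{1/2,1/2}$ from Krein's original work (or \cite{BS83,BS06}), not from the paper's Lemma~\ref{le:fact-property}. Finally, ``strong convergence of $Q_n$'' alone does not give $\|Q_nH(b_\lambda)H(\widetilde{c_\lambda})Q_n\|_{\cC_1}\to 0$ uniformly in $\lambda$: you need (i) the Hankel operators Hilbert--Schmidt (so that individual truncations vanish in $\cC_2$, which is what Lemma~\ref{le:HS} quantifies), and (ii) continuity of $\lambda\mapsto b_\lambda,c_\lambda$ in the Krein-algebra norm together with compactness of $\Sigma$ --- this is the content of Theorem~\ref{th:stability} and Lemma~\ref{le:uniform}. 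You gesture at this, but the appeal to ``strong convergence'' on its own would not close the argument.
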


Our main result is the following refinement of Theorem~\ref{th:Widom}, which
gives a higher order asymptotic trace formula.
\begin{theorem}\label{th:main}
Let $N\ge 1$ and $\alpha,\beta\in(0,1)$. Suppose that $\alpha+\beta\ge 1$.
If $a\in K_{N\times N}^{\alpha,\beta}$ and $f$ is analytic on
$\mathrm{sp}\,T(a)\cup\mathrm{sp}\,T(\widetilde{a})$,
then \eqref{eq:Widom} is true with $o(1)$ replaced by $o(n^{1-\alpha-\beta})$.
\end{theorem}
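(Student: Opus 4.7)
The plan is to execute Widom's argument for Theorem~\ref{th:Widom} with every intermediate error term sharpened to $o(n^{1-\alpha-\beta})$. First, by the analyticity of $f$ on $\overline{\Omega}\supset\mathrm{sp}\,T(a)\cup\mathrm{sp}\,T(\widetilde{a})$, a Cauchy-type representation for the analytic functional calculus reduces the desired formula to the uniform estimate
\[
\log D_n(\lambda) = o(n^{1-\alpha-\beta}) \quad\text{as } n\to\infty, \text{ uniformly for } \lambda\in\partial\Omega,
\]
where
\[
D_n(\lambda) := \frac{\det T_n(a-\lambda)\,/\,G(a-\lambda)^{n+1}}{\det T(a-\lambda)\,T((a-\lambda)^{-1})}.
\]

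To analyze $D_n(\lambda)$, invoke the Borodin-Okounkov formula (developed in Section~\ref{sec:Borodin-Okounkov}) together with a canonical matrix Wiener-Hopf factorization $a-\lambda = u_-^{(\lambda)} u_+^{(\lambda)}$. This yields
\[
D_n(\lambda) = \det\bigl(I - Q_n\,H(b_\lambda)\,H(\widetilde{c}_\lambda)\,Q_n\bigr),
\]
where $Q_n = I - P_n$ with $P_n$ the Riesz projection of $H^2_N$ onto polynomials of degree at most $n$, and $b_\lambda, c_\lambda$ are constructed from the Wiener-Hopf factors. A preliminary lemma asserts that, for $a-\lambda \in K^{\alpha,\beta}_{N\times N}$, one has $b_\lambda \in K^{\alpha,0}_{N\times N}$ and $c_\lambda \in K^{0,\beta}_{N\times N}$ with norms depending continuously on $\lambda$; the required factorization theory is available because $\alpha+\beta \ge 1$ forces $\max(\alpha,\beta) \ge 1/2$, so that $K^{\alpha,\beta}_{N\times N}$ is a Banach algebra in which canonical Wiener-Hopf factorization applies.

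The heart of the proof is the trace-norm estimate
\[
\|Q_n H(b_\lambda) H(\widetilde{c}_\lambda) Q_n\|_{S_1} = o(n^{1-\alpha-\beta})
\]
uniformly in $\lambda$. When $\alpha,\beta \ge 1/2$ this follows by H\"older in Schatten classes together with a direct Hilbert-Schmidt computation: for a scalar $g\in K^{0,\beta}$, using $(H(g))_{jk}=\widehat{g}(j+k+1)$ and summing in $m=j+k+1$,
\[
\|Q_n H(g)\|_{S_2}^2 = \sum_{m\ge n+2}(m-n-1)|\widehat{g}(m)|^2 \le n^{1-2\beta}\sum_{m>n}m^{2\beta}|\widehat{g}(m)|^2 = o(n^{1-2\beta}),
\]
and analogously $\|H(\widetilde{h}) Q_n\|_{S_2} = o(n^{1/2-\alpha})$ for $h\in K^{\alpha,0}$; multiplying and extending entrywise to the matrix case gives the required rate. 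The elementary bound $|\log\det(I-T) + \operatorname{trace}T| \le \tfrac{1}{2}\|T\|_{S_1}^2\exp\|T\|_{S_1}$ then converts this into $\log D_n(\lambda) = o(n^{1-\alpha-\beta})$, and integrating against $f$ over $\partial\Omega$ yields the theorem.

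The principal obstacle I anticipate is the asymmetric regime $\min(\alpha,\beta) < 1/2$, which is admissible under $\alpha+\beta \ge 1$ (e.g.\ $\alpha=1/4$, $\beta=3/4$): here one of the two Hankel factors is not Hilbert-Schmidt of the expected order, so the symmetric $S_2$-$S_2$ split breaks down and must be replaced by a weighted Schatten interpolation exploiting the combined smoothness $\alpha+\beta$ rather than the individual exponents. A secondary challenge is to make all Hilbert-Schmidt and Wiener-Hopf estimates uniform in $\lambda\in\partial\Omega$, which requires continuous dependence of the factors $u_\pm^{(\lambda)}$ on $\lambda$ in the $K^{\alpha,\beta}_{N\times N}$-norm.
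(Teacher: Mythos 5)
Your plan --- Borodin-Okounkov plus canonical Wiener-Hopf factorization in $K^{\alpha,\beta}_{N\times N}$, a trace-norm estimate on $Q_n H(b_\lambda)H(\widetilde{c_\lambda})Q_n$, a $\log\det$ bound, and Widom's differentiate-multiply-integrate step --- is precisely the architecture of the paper's proof. The $\log\det$ inequality you quote works; the paper uses the slightly simpler $|\log\det(I-A)|\le 2\|A\|_{\cC_1(\cH)}$ for $\|A\|_{\cC_1(\cH)}<1$ (Lemma~\ref{le:log-det}). Uniformity in $\lambda$ is obtained from stability of factors (Theorem~\ref{th:stability}) together with a compactness argument for the tails (Lemma~\ref{le:uniform}). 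Note also that, because $K^{\alpha,\beta}$ contains discontinuous functions, the factorization theory needed is the Heinig--Silbermann version for decomposing algebras of $L^\infty$ functions rather than the classical Budyanu--Gohberg theory for continuous functions, and the paper first disposes of $\alpha=\beta=1/2$ (which is Widom's theorem itself) so as to assume $\max\{\alpha,\beta\}>1/2$ in Lemma~\ref{le:fact-property} and Theorem~\ref{th:Krein-WH}.

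The genuine gap is exactly where you flag it: when $\min\{\alpha,\beta\}<1/2$ the unweighted $\cC_2\times\cC_2$ split breaks down. Concretely, if $\beta<1/2$ then $\sup_{m>n}(m-n-1)/m^{2\beta}=\infty$, so for $g\in K^{0,\beta}$ the operator $Q_nH(g):\ell_2\to\ell_2$ need not even be Hilbert--Schmidt, and your displayed inequality is false. The paper's fix (Lemma~\ref{le:HS}) is more elementary than the Schatten interpolation you anticipate: introduce the weighted sequence spaces $\ell_2^\gamma$ with norm $\big(\sum_k|\varphi_k|^2(k+1)^{2\gamma}\big)^{1/2}$, choose $\gamma:=1/2-\beta\in(-1/2,1/2)$, and estimate the two truncated Hankel factors as operators $Q_nH(b):\ell_2^\gamma\to\ell_2$ and $H(\widetilde{c})Q_n:\ell_2\to\ell_2^\gamma$. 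With this choice the exponent $\beta+\gamma-1/2$ for the first factor vanishes, so $\|Q_nH(b)\|_{\cC_2(\ell_2^\gamma,\ell_2)}$ is merely $O\big(r_{n+1}^+(b)\big)$, while the second factor carries the entire decay $\|H(\widetilde{c})Q_n\|_{\cC_2(\ell_2,\ell_2^\gamma)}=O\big(r_{n+1}^-(c)\,n^{-(\alpha-\gamma-1/2)}\big)=O\big(r_{n+1}^-(c)\,n^{1-\alpha-\beta}\big)$; the condition $\alpha\ge\gamma+1/2$ is exactly $\alpha+\beta\ge1$. H\"older across the weighted pair (inequality~\eqref{eq:Hoelder}) then yields $\|Q_nH(b)H(\widetilde{c})Q_n\|_{\cC_1}=o(n^{1-\alpha-\beta})$ throughout the admissible range, not merely for $\alpha,\beta\ge1/2$. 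Without this weighted lemma your argument establishes only the symmetric half of the theorem.
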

Notice that higher order asymptotic trace formulas are known for other classes
of symbols: see
\cite{VMS03} for $W\cap K^{\alpha,\alpha}$ with $\alpha>1/2$
(here $W$ stands for the Wiener algebra of functions with absolutely convergent Fourier series),
\cite{K-ZAA} for weighted Wiener algebras,
\cite{K-IWOTA05} for H\"older-Zygmund spaces,
\cite{K-WOAT06} for generalized H\"older spaces.
All these classes consist of continuous functions only. More precisely,
they are decomposing algebras of continuous functions in the sense of Budyanu
and Gohberg. An invertible matrix function in such an algebra admits a Wiener-Hopf
factorization within the algebra.
The proofs of \cite{K-ZAA,K-IWOTA05,K-WOAT06} are based on a combination of
this observation and an approach of B\"ottcher and Silbermann \cite{BS80}
(see also \cite[Sections~6.15--6.22]{BS83} and \cite[Sections~10.34--10.40]{BS06})
to higher order asymptotic formulas of Toeplitz determinants with Widom's original
proof of Theorem~\ref{th:Widom} (see \cite{Widom76} and \cite[Section~10.90]{BS06}).
As far as we know, Vasil'ev, Maximenko, and Simonenko have never published
a proof of the result stated in the short note \cite{VMS03}, however, their
result can be proved by the same method.

Generalized Krein algebras $K^{\alpha,\beta}$ may contain discontinuous functions.
To study them we need a more advanced factorization theory in decomposing
algebras of $L^\infty$ functions developed by Heinig and Silbermann \cite{HS84}.
We present main results of this theory in Section~\ref{sec:factorization} and
then apply them to $K^{\alpha,\beta}$ with $\alpha+\beta\ge 1$ and $\max\{\alpha,\beta\}>1/2$.
Under these assumptions, if both Toeplitz operators $T(a)$ and $T(\widetilde{a})$
are invertible, then $a$ admits simultaneously canonical right and left
Wiener-Hopf factorizations $a=u_-u_+=v_+v_-$ in $K_{N\times N}^{\alpha,\beta}$.
The factors and their inverses in these factorizations are stable under small
perturbations of $a$ in the norm of $K_{N\times N}^{\alpha,\beta}$. We will use
this fact in Section~\ref{sec:proof} for factorizations of $a-\lambda$, where
$\lambda$ belongs to a compact neighborhood $\Sigma$ of the boundary of a
set $\Omega$ containing $\operatorname{sp}T(a)\cup\operatorname{sp}T(\widetilde{a})$.

Section~\ref{sec:Borodin-Okounkov} contains some preliminaries on trace class
operators and their determinants. Further we formulate the Borodin-Okounkov
formula under weakened smoothness assumptions. This is an exact formula
which relates determinants of finite Toeplitz matrices $\det T_n(a)$ and
operator determinants of $I-Q_nH(b)H(\widetilde{c})Q_n$, where
$Q_nH(b)H(\widetilde{c})Q_n$ are truncations of the product of Hankel operators
$H(b)$ and $H(\widetilde{c})$ with $b:=v_-u_+^{-1}$ and $c:=u_-^{-1}v_+$.
Here $Q_n:=I-P_n$ and $P_n$ is the finite section projection.

If $a-\lambda\in K_{N\times N}^{\alpha,\beta}$, then we can effectively estimate
the speed of convergence of the trace class norm of $I-Q_nH[b(\lambda)]H[\widetilde{c(\lambda)}]Q_n$
to zero as $n\to\infty$ uniformly in $\lambda\in\Sigma$. This
speed is $o(n^{1-\alpha-\beta})$. Combining this estimate with the Borodin-Okounkov
formula for $a-\lambda$ and then applying Widom's ``differentiate-multiply-integrate"
arguments with respect to $\lambda\in\Sigma$, we prove Theorem~\ref{th:main}
in Section~\ref{sec:proof}.
\section{Wiener-Hopf factorization and generalized Krein algebras}
\label{sec:factorization}
\subsection{Wiener-Hopf factorization in decomposing algebras}
For a unital algebra $A$, let $\cG A$ denote the its group of invertible
elements.

Mark Krein \cite{Krein58} was the first to understand the Banach algebraic
background of Wiener-Hopf factorization and to present the method in a
crystal-clear manner. Gohberg and Krein \cite{GK58} proved that
$a\in \cG W_{N\times N}$ admits a Wiener-Hopf factorization. Later Budyanu and
Gohberg developed an abstract factorization theory in decomposing algebras
of \textit{continuous} functions. Their results are contained in
\cite[Chap.~2]{CG81}. Heinig and Silbermann \cite{HS84} extended the theory
of Budyanu and Gohberg to the case of decomposing algebras which may
contain \textit{discontinuous} functions. The following definitions and
results  are taken from \cite{HS84} (see also \cite[Chap.~5]{BS83}).

Let $A$ be a Banach algebra of complex-valued functions on the unit circle
$\T$  under a Banach algebra norm $\|\cdot\|_A$. The algebra $A$ is said to be
\textit{decomposing} if it possesses the following properties:

\begin{enumerate}
\item[(a)]
$A$ is continuously embedded in $L^\infty$;

\item[(b)]
$A$ contains all Laurent polynomials;

\item[(c)]
$PA\subset A$ and $QA\subset A$.
\end{enumerate}

Using the closed graph theorem it is easy to deduce from (a)--(c)
that $P$ and $Q$ are bounded on $A$ and that $PA$ and $QA$ are
closed subalgebras of $A$.
For $k\in\Z$ and $t\in\T$, put $\chi_k(t):=t^k$.
Given a decomposing algebra $A$ put
\[
A_+=PA,\quad
\stackrel{\circ}{A}_-=QA,\quad
\stackrel{\circ}{A}_+=\chi_1A_+,\quad
A_-=\chi_1\stackrel{\circ}{A}_-.
\]

Let $A$ be a decomposing algebra. A matrix function $a\in A_{N\times N}$
is said to \textit{admit a right} (resp. \textit{left})
\textit{Wiener-Hopf factorization in}
$A_{N\times N}$ if it can be represented in the form
\[
a=a_-da_+
\quad(\text{resp.}\ a=a_+da_-),
\]
where
$a_\pm\in \cG(A_\pm)_{N\times N}$ and
\[
d=\diag(\chi_{\kappa_1},\dots,\chi_{\kappa_N}),
\quad
\kappa_i\in\Z,
\quad
\kappa_1\le\kappa_2\le\dots\le\kappa_N.
\]
The integers $\kappa_i$ are usually called the \textit{right} (resp. \textit{left})
\textit{partial indices} of $a$; they can be shown to be uniquely determined by $a$.
If $\kappa_1=\dots=\kappa_N=0$, then the Wiener-Hopf factorization is
said to be \textit{canonical}.
A decomposing algebra $A$ is said to have the \textit{factorization
property} if every matrix function in $\cG A_{N\times N}$ admits a right
Wiener-Hopf factorization in $A_{N\times N}$.

Let $\cR$ be the restriction to the unit circle $\T$ of the set of all
rational functions defined on the whole plane $\C$ and having no poles
on $\T$.
\begin{theorem}\label{th:HS}
Let $A$ be a decomposing algebra. If at least one of the sets
\[
(\cR\,\cap\stackrel{\circ}{A_-})+A_+
\quad\text{or}\quad \stackrel{\circ}{A_-}+(\cR\cap A_+)
\]
is dense in $A$, then $A$ has the factorization property.
\end{theorem}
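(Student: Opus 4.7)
The plan is to adapt the classical Budyanu-Gohberg factorization scheme for continuous symbols to the discontinuous setting of a decomposing algebra, using the density hypothesis as a substitute for approximation by Laurent polynomials. I will assume throughout that the first set $(\cR\cap\stackrel{\circ}{A}_-) + A_+$ is dense in $A$; the argument for the other hypothesis is symmetric and produces first a left factorization, converted to a right one by the standard involution.

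\textbf{Step 1: canonical factorization near the identity.} I would first show that if $g\in A_{N\times N}$ satisfies $\|g-I\|_{A_{N\times N}}<\delta$ for $\delta>0$ small enough (depending on the operator norms of $P$ and $Q$ on $A$, which are finite by the closed graph theorem together with property (c)), then $g$ admits a canonical right Wiener-Hopf factorization $g=g_-g_+$ in $A_{N\times N}$. Using the ansatz $g_-=I+b_-$ with $b_-\in(\stackrel{\circ}{A}_-)_{N\times N}$ and $g_+=I+b_+$ with $b_+\in(A_+)_{N\times N}$, the identity $g-I=b_-+b_++b_-b_+$ splits under $Q$ and $P$ into the coupled fixed-point system
\[
b_- = Q(g-I) - Q(b_-b_+),\qquad b_+ = P(g-I) - P(b_-b_+),
\]
solvable by contraction on a small ball. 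The smallness of $b_\pm$ then yields invertibility of $I+b_\pm$ in $(A_\pm)_{N\times N}$ via Neumann series.

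\textbf{Step 2: rational factorization and approximation.} The classical Gohberg-Krein theorem (proved, e.g., via state-space realizations) gives that every invertible rational matrix function $\rho\in\cG\cR_{N\times N}$ admits a right Wiener-Hopf factorization $\rho=\rho_- d\rho_+$ with rational factors. Given $a\in\cG A_{N\times N}$, the density hypothesis applied entrywise provides, for any prescribed tolerance, an element $h:=r+p$ with $r\in(\cR\cap\stackrel{\circ}{A}_-)_{N\times N}$ and $p\in(A_+)_{N\times N}$ so close to $a$ in $A_{N\times N}$ that $h\in\cG A_{N\times N}$ and $ah^{-1}$ lies in the close-to-identity regime of Step 1, giving $ah^{-1}=u_-u_+$. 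The task thus reduces to producing a right Wiener-Hopf factorization of $h$ and assembling everything into a factorization of $a$.

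\textbf{Step 3: factoring $h$.} The crucial structural observation is that the scalar polynomial $q$ clearing all the finitely many poles of $r$ inside the unit disk satisfies $q\in A_+$ and $q(r+p)\in(A_+)_{N\times N}$, while $q^{-1}\in\cR\cap\stackrel{\circ}{A}_-\subset A$ has the elementary scalar right Wiener-Hopf factorization with index $-\deg q$. Thus $h=q^{-1}\cdot(qh)$ exhibits $h$ as a product of a pure ``minus-type'' scalar rational factor with a factor in $(A_+)_{N\times N}$ whose $L^\infty$ data is that of a rational matrix function. Applying Step 2 to an auxiliary rational matrix carrying the same factorization indices as $qh$, one obtains a right Wiener-Hopf factorization $h=h_-dh_+$ in $A_{N\times N}$.

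\textbf{Step 4: assembly, the main obstacle.} The technical heart of the argument, and the main obstacle to a short proof, is the rearrangement of the product $a=u_-u_+h_-dh_+$ into proper right Wiener-Hopf form. The intermediate factor $u_+h_-$ is of the shape (plus-function)$\cdot$(minus-function), and to separate it one writes $u_+h_- = h_-\cdot(h_-^{-1}u_+h_-)$ and re-applies Step 1 to the conjugate $h_-^{-1}u_+h_-$, which remains near $I$; the resulting minus-factor is absorbed into $u_-h_-$ while the new plus-factor must still be commuted past the diagonal $d$. Handling the remaining plus-diagonal-plus block requires an inductive descent controlled by the total degree of $q$, reducing at each step to a smaller close-to-identity problem for which Step 1 applies. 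This iterative rearrangement is carried out in detail in \cite{HS84} (see also \cite[Chap.~5]{BS83}), and completes the construction of the desired right Wiener-Hopf factorization $a=a_-da_+$ in $A_{N\times N}$.
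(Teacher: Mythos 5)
The paper does not prove this theorem; it is imported verbatim from Heinig--Silbermann \cite{HS84}. So there is no internal proof to compare against, and your proposal has to be judged on its own.

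Your Steps 1 and 2 are the standard and correct building blocks of the Budyanu--Gohberg scheme: canonical factorization near the identity via a contraction for the coupled system $b_-=Q(g-I)-Q(b_-b_+)$, $b_+=P(g-I)-P(b_-b_+)$, and the Gohberg--Krein rational factorization theorem. The approximation step is also set up correctly.

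The problem is Step 3, which is where the real content of the Heinig--Silbermann generalization lives, and it does not work as written. Two points. First, the assertion $q^{-1}\in\cR\cap\stackrel{\circ}{A}_-\subset A$ is not immediate from the hypotheses; the density assumption tells you nothing directly about which rational functions lie in $A$. It is in fact true, but it requires an argument: for each scalar entry $r_{ij}=N_{ij}/D_{ij}$ in lowest terms, the residues $z^m N_{ij}\bmod D_{ij}$ for $m=0,\dots,\deg D_{ij}-1$ span $\C[z]/(D_{ij})$ because $\gcd(N_{ij},D_{ij})=1$, so $1/D_{ij}$ is a finite linear combination of the functions $z^m r_{ij}-(\text{polynomial part})\in A$; then each prime power dividing $q=\operatorname{lcm}(D_{ij})$ divides some $D_{ij}$ and one assembles $q^{-1}$ as a product. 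Your proposal simply asserts the inclusion. Second, and more seriously, the claim that $qh=qr+qp$ ``has $L^\infty$ data of a rational matrix function'' is false: $qr$ is polynomial, but $qp$ is a general element of $(A_+)_{N\times N}$ and has no reason to be rational. Consequently the identity $h=q^{-1}(qh)$ is not a Wiener--Hopf factorization (indeed $qh\notin\cG(A_+)_{N\times N}$, since $(qh)^{-1}=h^{-1}q^{-1}$ has negative Fourier support coming from $q^{-1}$), and the sentence ``applying Step 2 to an auxiliary rational matrix carrying the same factorization indices as $qh$'' is too vague to close this gap --- you have neither defined that auxiliary rational matrix nor shown that $qh$ has well-defined partial indices or a factorization in $A_{N\times N}$ at this stage. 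Producing a right Wiener--Hopf factorization of $h=r+p$ in $A_{N\times N}$ is precisely the crux of the Heinig--Silbermann argument, and your Step 3 does not actually supply it. Step 4 then explicitly defers the merging machinery to \cite{HS84} itself, so the hardest two pieces of the proof are, respectively, incorrect and outsourced. The scaffolding is right, but the proof as presented would not compile into a valid argument for Theorem~\ref{th:HS}.
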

\subsection{Stability of factors and their inverses under small perturbations}
Let $A$ be a Banach algebra equipped with a norm $\|\cdot\|_A$. We will
always consider an admissible norm $\|\cdot\|_{A_{N\times N}}$ in $A_{N\times N}$.
Recall that a Banach algebra norm is said to be admissible (see \cite[Section 1.29]{BS06})
if there exist positive constants $m$ and $M$ such that
\[
m\max_{1\le i,j\le N}\|a_{ij}\|_A
\le
\|a\|_{A_{N\times N}}
\le
M\max_{1\le i,j\le N}\|a_{ij}\|_A
\]
for every matrix $a=(a_{ij})_{i,j=1}^N\in A_{N\times N}$.

The following result can be extracted from a stability theorem for factors
and their inverses in the Wiener-Hopf factorization in decomposing algebras
given in \cite[Theorem~6.15]{LS87}. There it was assumed, in addition, that
a decomposing algebra is continuously embedded in the set of all
continuous functions. However, the result is also true for decomposing
algebras in the sense of Heinig and Silbermann adopted in this paper.
\begin{theorem}\label{th:stability}
Let $A$ be a decomposing algebra and $N\ge 1$. Suppose $a,c\in A_{N\times N}$
both admit canonical right {\rm(}resp. left{\rm)} Wiener-Hopf factorizations in
$A_{N\times N}$. Then for any $\eps>0$ there exists a $\delta>0$ such that
if
\[
\|a-c\|_{A_{N\times N}}<\delta,
\]
then for every canonical right {\rm(}resp. left{\rm)} Wiener-Hopf factorization
$a=a_-^{(r)}a_+^{(r)}$ {\rm(}resp. $a=a_+^{(l)}a_-^{(l)}${\rm)}
one can choose a canonical right {\rm(}resp. left{\rm)} Wiener-Hopf factorization
$c=c_-^{(r)}c_+^{(r)}$ {\rm(}resp. $c=c_+^{(l)}c_-^{(l)}${\rm)} such that
\[
\|a_\pm^{(r)}-c_\pm^{(r)}\|_{A_{N\times N}}<\eps,
\quad
\|[a_\pm^{(r)}]^{-1}-[c_\pm^{(r)}]^{-1}\|_{A_{N\times N}}<\eps
\]
\[
(\text{resp.}\quad
\|a_\pm^{(l)}-c_\pm^{(l)}\|_{A_{N\times N}}<\eps,
\quad
\|[a_\pm^{(l)}]^{-1}-[c_\pm^{(l)}]^{-1}\|_{A_{N\times N}}<\eps\ ).
\]
\end{theorem}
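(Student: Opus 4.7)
The plan is to reduce the stability assertion to a \emph{factorization near the identity} lemma and then transfer the perturbation through multiplicatively. I will focus on the right case; the left case is handled symmetrically. Suppose $a=a_-^{(r)}a_+^{(r)}$ is any canonical right Wiener--Hopf factorization in $A_{N\times N}$. For $c\in A_{N\times N}$ close to $a$, set
$$
b:=[a_-^{(r)}]^{-1}\,c\,[a_+^{(r)}]^{-1}=I_N+[a_-^{(r)}]^{-1}(c-a)[a_+^{(r)}]^{-1}.
$$
Since $A_{N\times N}$ is a Banach algebra under an admissible norm, $\|b-I_N\|_{A_{N\times N}}\to 0$ as $\|a-c\|_{A_{N\times N}}\to 0$, with constant depending on $\|[a_\pm^{(r)}]^{-1}\|_{A_{N\times N}}$. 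If we can factor $b=b_-b_+$ canonically with $b_\pm\in\cG(A_\pm)_{N\times N}$ close to $I_N$ (and $b_\pm^{-1}$ close to $I_N$), then
$$
c=\bigl(a_-^{(r)}b_-\bigr)\bigl(b_+a_+^{(r)}\bigr)=:c_-^{(r)}c_+^{(r)}
$$
is a canonical right factorization of $c$ (the subalgebras $(A_\pm)_{N\times N}$ are closed under products and under taking inverses within themselves), and the four required estimates follow from identities such as $c_-^{(r)}-a_-^{(r)}=a_-^{(r)}(b_--I_N)$ and $[c_-^{(r)}]^{-1}-[a_-^{(r)}]^{-1}=(b_-^{-1}-I_N)[a_-^{(r)}]^{-1}$.

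The heart of the proof is the \emph{near-identity lemma}: there exists $\delta_0>0$ such that every $b\in A_{N\times N}$ with $\|b-I_N\|_{A_{N\times N}}<\delta_0$ admits a unique canonical right Wiener--Hopf factorization $b=(I_N+q_-)(I_N+q_+)$ with $q_-\in(\stackrel{\circ}{A}_-)_{N\times N}$, $q_+\in(A_+)_{N\times N}$, and $\|q_\pm\|_{A_{N\times N}}=O(\|b-I_N\|_{A_{N\times N}})$. To prove it, expand the identity $(I_N+q_-)(I_N+q_+)=b$ into $q_-+q_++q_-q_+=b-I_N$, and project with $P$ and $Q$ (both bounded on $A_{N\times N}$ by axiom (c) of a decomposing algebra and admissibility of the norm). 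Since $Pq_-=0$ and $Qq_+=0$, the system decouples into the fixed-point equations
$$
q_-=Q(b-I_N)-Q(q_-q_+),\qquad q_+=P(b-I_N)-P(q_-q_+).
$$
Because $\stackrel{\circ}{A}_-$ and $A_+$ are closed under multiplication, the right-hand side defines a self-map of a small ball in $(\stackrel{\circ}{A}_-)_{N\times N}\times(A_+)_{N\times N}$; the quadratic cross-term $q_-q_+$ makes it a contraction once $\|b-I_N\|$ is small, and the Banach fixed point theorem yields a unique solution $(q_-,q_+)$, continuous in $b$. Invertibility of the factors in the respective subalgebras follows from the Neumann series
$$
b_-^{-1}=\sum_{k\ge 0}(-q_-)^k\in I_N+(\stackrel{\circ}{A}_-)_{N\times N}\subset(A_-)_{N\times N},
$$
and analogously for $b_+^{-1}$; both inverses also depend continuously on $b$ near $I_N$. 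The left factorization is treated by the completely symmetric system arising from $(I_N+q_+)(I_N+q_-)=b$, after which one assembles $c=[a_+^{(l)}b_+][b_-a_-^{(l)}]$.

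The main obstacle, and the reason the stated result does not follow by pure citation of \cite[Theorem~6.15]{LS87}, is that the latter reference requires the decomposing algebra $A$ to be continuously embedded in $C(\T)$, whereas the generalized Krein algebras $K^{\alpha,\beta}$ may contain discontinuous functions. The contraction argument above, however, only uses (i) the boundedness of the projections $P$ and $Q$ on $A$ and (ii) the closure of $\stackrel{\circ}{A}_-$ and $A_+$ under multiplication. Neither property invokes continuity of the functions in $A$, so the argument and its conclusion transfer verbatim from the Budyanu--Gohberg setting of \cite{LS87} to the Heinig--Silbermann setting used in the remainder of this paper.
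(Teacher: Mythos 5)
The paper supplies no proof of this theorem; it cites \cite[Theorem~6.15]{LS87} (proved there under the additional hypothesis that $A$ is continuously embedded in $C(\T)$) and asserts without argument that this hypothesis is dispensable in the Heinig--Silbermann setting. Your proposal fills that gap with the standard near-identity/fixed-point proof and, importantly, makes the paper's assertion precise: the reduction $b=[a_-^{(r)}]^{-1}c[a_+^{(r)}]^{-1}$, the projected equations $q_-=Q(b-I_N)-Q(q_-q_+)$ and $q_+=P(b-I_N)-P(q_-q_+)$ (correctly derived, since $Pq_-=0$ and $Qq_+=0$), the contraction estimate coming from the quadratic cross-term, the Neumann-series inversion of $I_N+q_\pm$ inside $(A_\pm)_{N\times N}$, and the telescoping identities for the four estimates all go through, and none of them touches pointwise continuity of elements of $A$ --- only boundedness of $P,Q$ on $A$ (which follows from (a)--(c) and the closed graph theorem) and the subalgebra structure of $A_\pm$ and $\stackrel{\circ}{A}_\pm$. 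So your argument is correct and is, in substance, the proof the paper implicitly points to; what you add over the paper is an explicit demonstration of \emph{why} the continuity hypothesis plays no role.

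One caveat, which concerns the theorem's formulation more than your argument: as written, the factorization of $a$ is quantified \emph{after} $\delta$. Canonical right factorizations of $a$ form the orbit $(a_-^{(r)}D^{-1},\,Da_+^{(r)})$ with $D\in\cG\C_{N\times N}$; your $\delta$ must force $\|b-I_N\|_{A_{N\times N}}$ below the contraction threshold, and for the factorization tagged by $D$ one has $b_D=DbD^{-1}$, so $\|b_D-I_N\|\le\|D\|\,\|D^{-1}\|\,\|b-I_N\|$, which is unbounded as $D$ varies. Hence no single $\delta$ can serve uniformly over all factorizations, and the statement must be read with a specific factorization of $a$ fixed before $\delta$ is chosen. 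That is exactly how it is used in Section~4 (factorizations of $a-\lambda$ chosen continuously in $\lambda$ over a compact set), and that is precisely what your argument establishes, so this is a quibble about quantifier order in the statement, not a defect in your proof.
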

\subsection{Invertibility in generalized Krein algebras}
For $1\le p\le\infty$, let $\overline{H^p}:=\{a\in L^p: \overline{a}\in H^p\}$ and let
$C:=C(\T)$ denote the set of all continuous functions on $\T$. If $\alpha,\beta\ge 1/2$, then
in view of \cite[Lemma~6.2]{BKS07},
\begin{equation}\label{eq:Krein-embedding}
K^{\alpha,0}\subset C+H^\infty,\quad K^{0,\beta}\subset C+\overline{H^\infty}.
\end{equation}
Hence, if $\alpha,\beta\in(0,1)$ and $\alpha+\beta\ge 1$, then
\begin{equation}\label{eq:Krein-embedding-2}
K^{\alpha,\beta}\subset (C+H^\infty)\cup (C+\overline{H^\infty}).
\end{equation}

The following result was proved by Krein \cite{Krein66} for $\alpha=\beta=1/2$.
\begin{theorem}[see {\cite[Theorem~1.4]{BKS07}}]\label{th:invertibility}
Let $\alpha,\beta\in(0,1)$.
\begin{enumerate}
\item[(a)]
Suppose $\alpha\ge 1/2$ and $K$ is either $K^{\alpha,0}$ or $K^{\alpha,1-\alpha}$.
If $a\in K$, then
\[
a\in \cG K\Longleftrightarrow a\in \cG(C+H^\infty).
\]

\item[(b)]
Suppose $\beta\ge 1/2$ and $K$ is either $K^{0,\beta}$ or $K^{1-\beta,\beta}$.
If $a\in K$, then
\[
a\in \cG K\Longleftrightarrow a\in \cG(C+\overline{H^\infty}).
\]
\end{enumerate}
\end{theorem}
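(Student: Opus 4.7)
The direction $a\in\cG K\Rightarrow a\in\cG(C+H^\infty)$ in (a) is immediate from \eqref{eq:Krein-embedding}: the inverse of $a$ computed in $K$ already sits in $C+H^\infty$. The content of the theorem is the reverse implication, which asserts that $K$ is inverse-closed in the Douglas algebra $C+H^\infty$. Case (b) follows from (a) via the isometric involution $a\mapsto\overline a$, which interchanges $K^{\alpha,0}$ with $K^{0,\alpha}$ and $C+H^\infty$ with $C+\overline{H^\infty}$ while preserving invertibility; the matrix case $N>1$ is handled entrywise throughout, thanks to admissibility of the matrix norm.

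For the reverse implication in (a) with $K=K^{\alpha,0}$, my plan is as follows. Given $a\in K$ with $b:=1/a\in C+H^\infty$, I must show $\sum_{k\ge 1}|\widehat b(-k)|^2k^{2\alpha}<\infty$. Write $a=Pa+Qa$ and read $ab=1$ on negative Fourier frequencies: for every $k\ge 1$ one has $\sum_j\widehat a(j)\widehat b(-k-j)=0$. Rearranged, this is a bounded linear equation on the sequence $\{\widehat b(-k)\}_{k\ge 1}$ regarded in the weighted $\ell^2$-space with weight $k^{2\alpha}$; its ``diagonal'' part is governed by $Pa$ (together with the inverse $b$ already in hand in $C+H^\infty$) and its ``off-diagonal'' part is an operator built from the Hankel operator $H(\widetilde a)$. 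The hypothesis $\alpha\ge 1/2$ makes the off-diagonal part Hilbert--Schmidt on the weighted space: for $\alpha>1/2$ the Fourier series of $Qa$ is absolutely convergent, while for $\alpha=1/2$ the condition $\sum_{k\ge 1}|\widehat a(-k)|^2k<\infty$ is exactly the Hilbert--Schmidt condition on $H(\widetilde a)$. Solving the equation then yields the desired weighted summability for $\widehat b$. The hybrid algebra $K=K^{\alpha,1-\alpha}$ is handled by bootstrapping: the preceding step delivers the $\alpha$-smoothness of negative coefficients of $b$, and the symmetric argument applied to $\widetilde b$ (which swaps the roles of $P$ and $Q$) delivers the $(1-\alpha)$-smoothness of positive coefficients, using that $\alpha+(1-\alpha)=1$ is precisely the Hilbert--Schmidt-Hankel threshold for the mixed product that appears.

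The step I expect to be the main obstacle is the borderline $\alpha=1/2$, where \eqref{eq:Krein-embedding} is sharp and the Fourier series of $Qa$ need not converge absolutely. At that endpoint a naive norm iteration on the linear system for $\widehat b$ does not close, and one must instead argue inside the Hilbert--Schmidt ideal, exploiting Krein's original observation that $\sum_{k\ge 1}|\widehat a(-k)|^2k<\infty$ is precisely the Hilbert--Schmidt membership of $H(\widetilde a)$, so that $[P,a]$ is Hilbert--Schmidt and the linear system above is a Hilbert--Schmidt perturbation of an exactly solvable analytic problem. This compact-perturbation structure, rather than the smoothness of $a$ by itself, is what pins the admissible threshold at $\alpha\ge 1/2$.
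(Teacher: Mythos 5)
The paper does not prove this statement at all: Theorem~\ref{th:invertibility} is imported verbatim from \cite[Theorem~1.4]{BKS07} (with the case $\alpha=\beta=1/2$ attributed to Krein \cite{Krein66}), and no argument is given in this text. So there is no in-paper proof to compare your sketch against; I can only assess the sketch on its own merits.

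Your peripheral observations are fine: the ``easy'' direction does follow from the embedding \eqref{eq:Krein-embedding}, and part~(b) does reduce to part~(a) via $a\mapsto\overline a$, which sends $K^{0,\beta}$ to $K^{\beta,0}$ and $K^{1-\beta,\beta}$ to $K^{\beta,1-\beta}$ while exchanging $C+H^\infty$ and $C+\overline{H^\infty}$. (The remark about the matrix case is beside the point --- the theorem is scalar, and entrywise arguments would not handle matrix invertibility anyway.)

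The core of the plan, however, is too loose to count as a proof, and several steps look like genuine obstructions rather than routine gaps. Reading $ab=1$ at negative frequencies gives, after reindexing, the system $T(\widetilde a)\,\mathbf b_-=-H(\widetilde a)\,\mathbf b_+$, where $\mathbf b_\pm$ are the positive/negative Fourier coefficient sequences of $b$. Your ``diagonal'' part is then a triangular Toeplitz operator with symbol built from $Pa$. But for $a\in K^{\alpha,0}$ the analytic part $Pa$ is merely an $H^\infty$ function, and $H^\infty$ is \emph{not} the multiplier algebra of the weighted space $\ell_2^\alpha$ when $\alpha>0$ (these are Dirichlet-type spaces, whose multipliers form a proper subalgebra of $H^\infty$). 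So the boundedness, let alone invertibility, of the ``diagonal'' block on the weighted space you work in is exactly what is at issue and cannot be waved through. Related to this, you never reduce to zero winding number; without that reduction one cannot even expect the Toeplitz piece to be invertible on $\ell_2$. At the endpoint $\alpha=1/2$, which you flag, the situation is worse than you indicate: the Hilbert--Schmidt norm of the Hankel block from $\ell_2^{1/2}$ to $\ell_2^{1/2}$ picks up a logarithmic factor, i.e.\ $\sum_k|\widehat a(-k)|^2\,k\log k$, which the hypothesis $a\in K^{1/2,0}$ does not control; so the ``same'' weighted-space Hilbert--Schmidt argument does not close there, and the remedy you gesture at (arguing ``inside the Hilbert--Schmidt ideal'') is not spelled out. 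Finally, the bootstrap for $K^{\alpha,1-\alpha}$ is not symmetric in the way you claim: for $\alpha>1/2$ the exponent on the analytic side is $1-\alpha<1/2$, so the ``symmetric argument applied to $\widetilde b$'' is an instance of part~(b) with $\beta<1/2$, which is outside the hypotheses, and the appeal to $\alpha+(1-\alpha)=1$ being the trace-class threshold for $H(b)H(\widetilde c)$ is not by itself a mechanism for transferring weighted $\ell_2$ decay to the positive Fourier coefficients of $b$. In short: the strategy (Hankel identities, Schatten-class perturbation) is plausible in outline, but as written it leaves the hardest parts --- boundedness/invertibility of the Toeplitz pieces on $\ell_2^\alpha$, the $\alpha=1/2$ endpoint, and the hybrid case --- unresolved.
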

\begin{corollary}\label{co:invertibility}
Let $\alpha,\beta\in(0,1)$.
\begin{enumerate}
\item[(a)]
Suppose $\alpha\ge 1-\beta\ge 1/2$.
If $a\in K^{\alpha,\beta}$, then
\[
a\in \cG K^{\alpha,\beta}\Longleftrightarrow a\in \cG(C+H^\infty).
\]

\item[(b)]
Suppose $\beta\ge 1-\alpha\ge 1/2$.
If $a\in K^{\alpha,\beta}$, then
\[
a\in \cG K^{\alpha,\beta}\Longleftrightarrow a\in \cG(C+\overline{H^\infty}).
\]

\item[(c)]
Suppose $\alpha\ge \beta\ge 1/2$ or $\beta\ge\alpha\ge 1/2$.
If $a\in K^{\alpha,\beta}$, then
\[
a\in \cG K^{\alpha,\beta}\Longleftrightarrow
a\in \cG\big((C+H^\infty)\cap(C+\overline{H^\infty})\big).
\]
\end{enumerate}
\end{corollary}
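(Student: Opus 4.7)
My plan is to deduce each part of Corollary~\ref{co:invertibility} from two applications of Theorem~\ref{th:invertibility}, exploiting $K^{\alpha,\beta}=K^{\alpha,0}\cap K^{0,\beta}$ and choosing the algebra $K$ in Theorem~\ref{th:invertibility} cleverly. In all three cases, the implication $a\in\cG K^{\alpha,\beta}\Rightarrow a\in\cG(\dots)$ is easy: if $a^{-1}\in K^{\alpha,\beta}$, then the embeddings \eqref{eq:Krein-embedding} (applicable because $\alpha\ge 1/2$ in (a) and (c), and $\beta\ge 1/2$ in (b) and (c)) immediately place $a^{-1}$ in the relevant sum algebra. For the converse in part (c), I would apply Theorem~\ref{th:invertibility}(a) with $K=K^{\alpha,0}$ to obtain $a^{-1}\in K^{\alpha,0}$, and Theorem~\ref{th:invertibility}(b) with $K=K^{0,\beta}$ to obtain $a^{-1}\in K^{0,\beta}$; intersecting yields $a^{-1}\in K^{\alpha,\beta}$.

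The trick for parts (a) and (b), where only one of $\alpha,\beta$ reaches $1/2$, is to invoke Theorem~\ref{th:invertibility} twice for \emph{two different} algebras. I describe (a); (b) is symmetric. First, as in (c), Theorem~\ref{th:invertibility}(a) with $K=K^{\alpha,0}$ gives $a^{-1}\in K^{\alpha,0}$. For the positive-coefficient half, the key observation is that $K^{1-\beta,\beta}$ is again of the form $K^{\alpha',1-\alpha'}$ with $\alpha'=1-\beta\ge 1/2$, so Theorem~\ref{th:invertibility}(a) also applies to it. The hypothesis $\alpha\ge 1-\beta$ is precisely what ensures $K^{\alpha,\beta}\subset K^{1-\beta,\beta}$: the negative-coefficient weight $k^{2\alpha}$ dominates $k^{2(1-\beta)}$, while the positive weight $k^{2\beta}$ is the same. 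Hence $a\in K^{1-\beta,\beta}$, and Theorem~\ref{th:invertibility}(a) yields $a^{-1}\in K^{1-\beta,\beta}\subset K^{0,\beta}$. Combining, $a^{-1}\in K^{\alpha,0}\cap K^{0,\beta}=K^{\alpha,\beta}$. Part (b) runs identically with $\alpha,\beta$ swapped, using Theorem~\ref{th:invertibility}(b) with $K=K^{0,\beta}$ in the first step and $K=K^{\alpha,1-\alpha}$ in the second.

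I do not foresee a serious obstacle: once one notices that Theorem~\ref{th:invertibility} can be applied twice, with the two choices of $K$ tailored to the negative and positive Fourier-coefficient regularity separately, the proof reduces to checking the inclusions $K^{\alpha,\beta}\subset K^{1-\beta,\beta}$ and $K^{\alpha,\beta}\subset K^{\alpha,1-\alpha}$ implied by $\alpha+\beta\ge 1$. All the substance lies in Theorem~\ref{th:invertibility} itself.
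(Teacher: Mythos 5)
Your proposal is correct and is essentially the paper's own argument: you identify $K^{\alpha,\beta}$ as the intersection $K^{\alpha,0}\cap K^{1-\beta,\beta}$ (resp. $K^{0,\beta}\cap K^{\alpha,1-\alpha}$, resp. $K^{\alpha,0}\cap K^{0,\beta}$) and apply Theorem~\ref{th:invertibility} to each factor, exactly as the paper does. The only cosmetic difference is that you treat the easy implication via the embeddings \eqref{eq:Krein-embedding} separately, whereas the paper simply invokes the if-and-only-if statement of Theorem~\ref{th:invertibility} for both directions at once.
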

\begin{proof}
(a) Let $a\in K^{\alpha,\beta}=K^{1-\beta,\beta}\cap K^{\alpha,0}$.
By Theorem~\ref{th:invertibility}(a),
\[
a\in \cG K^{1-\beta,\beta}\Longleftrightarrow a\in \cG(C+H^\infty),
\quad
a\in \cG K^{\alpha,0}\Longleftrightarrow a\in \cG(C+H^\infty).
\]
Thus $a\in \cG K^{\alpha,\beta}\Longleftrightarrow a\in \cG(C+H^\infty)$.
Part (a) is proved. Part (b) follows form Theorem~\ref{th:invertibility}(b)
in the same fashion.

(c) Let $a\in K^{\alpha,\beta}=K^{\alpha,0}\cap K^{0,\beta}$. From
Theorem~\ref{th:invertibility} it follows that
\[
a\in \cG K^{\alpha,0}\Longleftrightarrow \cG(C+H^\infty),
\quad
a\in \cG K^{0,\beta}\Longleftrightarrow \cG(C+\overline{H^\infty}).
\]
Hence $a\in \cG K^{\alpha,\beta}=G(K^{\alpha,0}\cap K^{0,\beta})\Longleftrightarrow
a\in \cG\big((C+H^\infty)\cap(C+\overline{H^\infty})\big)$. Part (c) is proved.
\end{proof}
\subsection{Wiener-Hopf factorization in generalized Krein algebras}
\begin{lemma}\label{le:fact-property}
Let $\alpha,\beta\in(0,1)$ and $\max\{\alpha,\beta\}>1/2$. Then $K^{\alpha,\beta}$
is a decomposing algebra with the factorization property.
\end{lemma}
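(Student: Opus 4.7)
The plan is to check the three defining properties of a decomposing algebra for $K^{\alpha,\beta}$ directly from the weighted $\ell^2$ description of its Fourier coefficients, and then to deduce the factorization property by verifying the density hypothesis of Theorem~\ref{th:HS}. Conditions (a) and (b) are built into the norm $\|\cdot\|_{\alpha,\beta}$: the $L^\infty$ summand of the norm gives continuous embedding into $L^\infty$, and every Laurent polynomial belongs to $K^{\alpha,\beta}$ because it has only finitely many nonzero Fourier coefficients.

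The substantive point is (c), and it is here that the hypothesis $\max\{\alpha,\beta\}>1/2$ enters. Assume without loss of generality that $\beta>1/2$. For $a\in K^{\alpha,\beta}$, Cauchy--Schwarz applied to the factorization $|\widehat{a}(k)|=(k+1)^{-\beta}(k+1)^\beta|\widehat{a}(k)|$, together with $\sum_{k\ge 0}(k+1)^{-2\beta}<\infty$, yields $\sum_{k\ge 0}|\widehat{a}(k)|<\infty$. Hence $Pa$ lies in the Wiener algebra and in particular in $L^\infty$, and a direct inspection of the two weighted $\ell^2$ conditions puts $Pa$ into $K^{\alpha,\beta}$. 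Since $Qa=a-Pa$ is then automatically in $L^\infty$, the same check places $Qa$ in $K^{\alpha,\beta}$ as well. This establishes $PK^{\alpha,\beta}\subset K^{\alpha,\beta}$ and $QK^{\alpha,\beta}\subset K^{\alpha,\beta}$, so $K^{\alpha,\beta}$ is a decomposing algebra.

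For the factorization property I plan to invoke Theorem~\ref{th:HS} applied to the set $\stackrel{\circ}{A}_-+(\cR\cap A_+)=QK^{\alpha,\beta}+(\cR\cap PK^{\alpha,\beta})$. Given $a\in K^{\alpha,\beta}$, write $a=Qa+Pa$ and approximate $Pa$ by the analytic polynomials $\sum_{k=0}^N\widehat{a}(k)\chi_k$, which clearly belong to $\cR\cap PK^{\alpha,\beta}$. The $K^{\alpha,\beta}$-norm of the remainder $\sum_{k>N}\widehat{a}(k)\chi_k$ decomposes into a Wiener-algebra tail controlled by the same Cauchy--Schwarz estimate and a weighted $\ell^2$ tail, both of which tend to zero as $N\to\infty$. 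Consequently the indicated set is dense in $K^{\alpha,\beta}$ and Theorem~\ref{th:HS} yields the factorization property. The symmetric case $\alpha>1/2$ is handled identically with the roles of $P$ and $Q$ exchanged, using the first density set from Theorem~\ref{th:HS}.

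The only delicate point is the boundedness of $P$ and $Q$ on $K^{\alpha,\beta}$, since neither projection is bounded on $L^\infty$ in general; but the Wiener-algebra observation above converts the hypothesis $\max\{\alpha,\beta\}>1/2$ into precisely the smoothness needed to force one of the two half-projections into $L^\infty$, after which the polynomial truncation argument for density is routine.
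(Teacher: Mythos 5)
Your proof is correct, and it takes a genuinely different and more elementary route than the paper's. The paper establishes property (c) by citing \cite[Lemma~6.1]{BKS07} for the boundedness of $P$ and $Q$ on $K^{\alpha,\beta}$, and then obtains the density hypothesis of Theorem~\ref{th:HS} by rewriting $K^{\alpha,\beta}=L^\infty\cap(QB_2^\alpha+PB_2^\beta)$ in terms of Besov spaces and invoking density results from Schmeisser--Triebel \cite[Sections~3.5.1 and 3.5.5]{ST87}. You instead observe that $\beta>1/2$ forces, via Cauchy--Schwarz, the analytic part of any $a\in K^{\alpha,\beta}$ into the Wiener algebra, and this single observation yields both property~(c) (since $Pa\in L^\infty$, hence $Pa\in K^{\alpha,\beta}$, and $Qa=a-Pa$ follows) and the density of $\stackrel{\circ}{A}_-+(\cR\cap A_+)$ by truncating the Fourier series of $Pa$: the remainder's $L^\infty$ norm is dominated by the absolutely convergent Wiener tail and its weighted $\ell^2$ part is a tail of a convergent series. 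The case $\alpha>1/2$ is symmetric, using the other density set. What the paper's route buys is modularity and brevity via citation; what yours buys is self-containment and a transparent explanation of exactly how the hypothesis $\max\{\alpha,\beta\}>1/2$ enters. One cosmetic remark: the definition of a decomposing algebra only requires the set inclusions $PA\subset A$, $QA\subset A$ (boundedness then follows by the closed graph theorem, as the paper notes), so your worry at the end about proving boundedness is already resolved by your inclusion argument; you need not belabor it.
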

\begin{proof}
The statement is proved by analogy with \cite[Lemma~7.7]{BKS07}.
By \cite[Lemma~6.1]{BKS07}, the projections $P$ and $Q$
are bounded on $K^{\alpha,\beta}$. Hence $K^{\alpha,\beta}$ is a decomposing
algebra. Assume that $\beta>1/2$. Taking into account that
\[
K^{\alpha,\beta}=L^\infty\cap (QB_2^\alpha+PB_2^\beta),
\]
where $B_2^\alpha$ and $B_2^\beta$ are Besov spaces, from \cite[Sections~3.5.1 and 3.5.5]{ST87}
one can deduce that $\cR\cap PK^{\alpha,\beta}$ is dense in $PK^{\alpha,\beta}$.
Analogously, if $\alpha>1/2$, then $\cR\cap QK^{\alpha,\beta}$ is dense
in $QK^{\alpha,\beta}$. Theorem~\ref{th:HS} gives the factorization
property of $K^{\alpha,\beta}$.
\end{proof}
\begin{theorem}\label{th:Krein-WH}
Let $N\ge 1$, $\alpha,\beta\in(0,1)$, $\alpha+\beta\ge 1$, and $\max\{\alpha,\beta\}>1/2$.
If $a\in K_{N\times N}^{\alpha,\beta}$ and both $T(a)$ and $T(\widetilde{a})$
are invertible on $H_N^2$, then $a$ is invertible in $K_{N\times N}^{\alpha,\beta}$
and admits canonical right and left Wiener-Hopf factorizations in
$K_{N\times N}^{\alpha,\beta}$.
\end{theorem}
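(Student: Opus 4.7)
The plan is to first establish that $a$ is invertible in the matrix algebra $K^{\alpha,\beta}_{N\times N}$, and then to extract the two canonical Wiener--Hopf factorizations from Lemma~\ref{le:fact-property}, applied to $a$ for the right factorization and to $\widetilde a$ for the left one; in each case, invertibility of the corresponding Toeplitz operator will force the partial indices to vanish.

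For the invertibility step I would use the embedding \eqref{eq:Krein-embedding-2}. Depending on which of $\alpha$ and $\beta$ exceeds $1/2$, the matrix function $a$ lies in $(C+H^\infty)_{N\times N}$ or in $(C+\overline{H^\infty})_{N\times N}$, and in both when $\alpha,\beta\ge 1/2$. The (matrix) Douglas--Sarason theorem then upgrades Fredholmness of $T(a)$ on $H^2_N$ to membership of $a$ in $\cG(C+H^\infty)_{N\times N}$, and analogously Fredholmness of $T(\widetilde a)$ gives $\widetilde a\in\cG(C+H^\infty)_{N\times N}$, equivalently $a\in\cG(C+\overline{H^\infty})_{N\times N}$. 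The passage from the scalar to the matrix Douglas--Sarason statement goes via the determinant: a matrix over a commutative algebra is invertible iff its determinant is, and $\det a$ lies in $K^{\alpha,\beta}$ since the latter is a Banach algebra. Corollary~\ref{co:invertibility}, applied to the scalar $\det a$, then yields $\det a\in\cG K^{\alpha,\beta}$, so $a^{-1}=(\det a)^{-1}\operatorname{adj}(a)\in K^{\alpha,\beta}_{N\times N}$, and hence $a\in\cG K^{\alpha,\beta}_{N\times N}$.

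With invertibility in hand, Lemma~\ref{le:fact-property} provides a right Wiener--Hopf factorization $a=u_- d u_+$ in $K^{\alpha,\beta}_{N\times N}$ with $d=\diag(\chi_{\kappa_1},\ldots,\chi_{\kappa_N})$. Using the identity $T(fg)=T(f)T(g)+H(f)H(\widetilde g)$ together with $H(u_-)=0$ and $H(\widetilde{u_+})=0$ (which follow from $(H(a))_{m,k}=\widehat a(m+k+1)$ and the fact that $u_-$ and $\widetilde{u_+}$ have only non-positive Fourier coefficients), one factors $T(a)=T(u_-)T(d)T(u_+)$. The outer factors have two-sided inverses $T(u_\pm^{-1})$, so invertibility of $T(a)$ forces $T(d)=\diag(T(\chi_{\kappa_i}))$ to be invertible, which occurs only when every $\kappa_i=0$; thus $a=u_-u_+$ is canonical. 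For the left factorization I would run the identical argument on $\widetilde a\in K^{\beta,\alpha}_{N\times N}$, noting that the hypotheses of the theorem are symmetric in $\alpha,\beta$ and that $T(\widetilde{\widetilde a})=T(a)$ is invertible. This yields a canonical right factorization $\widetilde a=w_-w_+$ in $K^{\beta,\alpha}_{N\times N}$, and applying the tilde involution $a\mapsto\widetilde a$ (which sends $K^{\beta,\alpha}$ to $K^{\alpha,\beta}$ and interchanges $A_+$ with $A_-$) produces $a=\widetilde{w_-}\widetilde{w_+}=:v_+v_-$, a canonical left factorization of $a$ in $K^{\alpha,\beta}_{N\times N}$.

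The main obstacle is the first step: passing from operator-theoretic invertibility of $T(a)$ and $T(\widetilde a)$ on $H^2_N$ to algebraic invertibility of $a$ in the proper subalgebra $K^{\alpha,\beta}_{N\times N}$ of $L^\infty_{N\times N}$, which is not itself inverse-closed. This requires combining the matrix Douglas--Sarason theorem with the Krein-type inverse-closedness recorded in Corollary~\ref{co:invertibility}; once this is carried out, the remainder is a direct application of the factorization theory from Section~\ref{sec:factorization}.
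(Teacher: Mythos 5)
Your proposal is correct, and the invertibility step follows the paper's strategy almost exactly (embedding, Douglas--Sarason/Theorem~2.94, reduction to $\det a$, Corollary~\ref{co:invertibility}, adjugate). One small caution there: the sentence deducing $a\in\cG(C+\overline{H^\infty})_{N\times N}$ from invertibility of $T(\widetilde a)$ tacitly presupposes $\widetilde a\in(C+H^\infty)_{N\times N}$, which holds only when $\beta\ge 1/2$. In the asymmetric case $\alpha>1/2>\beta$ you cannot (and do not need to) get $\cG(C+\overline{H^\infty})$; Corollary~\ref{co:invertibility}(a) already gives $\det a\in\cG K^{\alpha,\beta}$ from $\det a\in\cG(C+H^\infty)$ alone, and symmetrically in the case $\beta>1/2>\alpha$ it is $T(\widetilde a)$ that drives the argument via Corollary~(b). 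The paper carries out this case distinction explicitly; your write-up should too, but this is a matter of bookkeeping rather than a missing idea.

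Where you genuinely diverge from the paper is in the two factorization steps, and both of your alternatives are valid. To conclude that the right factorization $a=u_-du_+$ produced by Lemma~\ref{le:fact-property} is canonical, the paper invokes Simonenko's theorem to get a canonical right generalized factorization of $a$ in $L^2_N$ and then appeals to the uniqueness of partial indices of generalized factorizations to force $d=I$. You instead use $H(u_-)=0$, $H(\widetilde{u_+})=0$ to write $T(a)=T(u_-)T(d)T(u_+)$ with $T(u_\pm)$ invertible, and then read off $\kappa_i=0$ from invertibility of $T(\chi_{\kappa_i})$; this is more elementary and avoids importing the generalized factorization machinery. For the left factorization, the paper passes to $a^{-1}$ via the invertibility of $T(a^{-1})$ (Proposition~7.19(b) of B\"ottcher--Silbermann) and inverts the factors, whereas you apply the already-proved right-factorization statement to $\widetilde a\in K^{\beta,\alpha}_{N\times N}$ and transport the result back by the involution $a\mapsto\widetilde a$, which swaps $A_+$ and $A_-$; both routes are correct, and yours has the small advantage of not needing the extra invertibility lemma for $T(a^{-1})$.
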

\begin{proof}
Once one has at hands Corollary~\ref{co:invertibility} and Lemma~\ref{le:fact-property},
the proof is developed as in \cite[Theorem~1.7(a)]{BKS07}. For the convenience
of the reader we give a complete proof here.

Suppose $\alpha=\max\{\alpha,\beta\}$. It is clear that for every $\beta\in(0,1)$
one has $\beta\ge 1/2$ or $1-\beta\ge 1/2$. Thus $\alpha\ge\beta\ge 1/2 \ (\ge 1-\beta)$
or $\alpha\ge 1-\beta\ge 1/2 \ (\ge\beta)$. In the first case from
\eqref{eq:Krein-embedding} it follows that
\[
K_{N\times N}^{\alpha,\beta}\subset (C+H^\infty)_{N\times N}\cap (C+\overline{H^\infty})_{N\times N}.
\]
Since $T(\widetilde{a})$ and $T(a)$ are invertible, from \cite[Theorem~2.94(a)]{BS06}
we deduce that $\det a$ and $\det\widetilde{a}$ belong to $\cG(C+H^\infty)$.
Hence, $\det a$ belongs to $\cG\big((C+H^\infty)\cap (C+\overline{H^\infty})\big)$.
By Corollary~\ref{co:invertibility}(c), $\det a\in \cG K^{\alpha,\beta}$.
Then, in view of \cite[Chap.~1, Theorem~1.1]{Krupnik87}, $a\in \cG K_{N\times N}^{\alpha,\beta}$.

The case $\alpha\ge 1-\beta\ge 1/2$ is treated with the help of
Corollary~\ref{co:invertibility}(a). Then $a\in \cG K_{N\times N}^{\alpha,\beta}$.
Analogously, if $\beta=\max\{\alpha,\beta\}$, then by using
Corollary~\ref{co:invertibility}(b) or (c), one cans how that
$a\in \cG K_{N\times N}^{\alpha,\beta}$.

By Simonenko's factorization theorem (see, e.g. \cite[Chap.~7, Theorem~3.2]{CG81}
or \cite[Theorem~3.14]{LS87}), if $T(a)$ is invertible on $H_N^2$, then
$a$ admits a canonical right generalized factorization in $L_N^2$, that is,
there exist functions $a_-$, $a_+$ such that $a=a_-a_+$ and $a_-^{\pm 1}\in (\overline{H^2})_{N\times N}$,
$a_+^{\pm 1}\in (H^2)_{N\times N}$. On the other hand, in view of Lemma~\ref{le:fact-property},
$a$ admits a right Wiener-Hopf factorization in $K_{N\times N}^{\alpha,\beta}$,
that is, there exist functions $u_\pm\in \cG(K_\pm^{\alpha,\beta})_{N\times N}$ such that
$a=u_-du_+$ and $d$ is a diagonal term of the form
$d=\operatorname{diag}(\chi_{\kappa_1},\dots,\chi_{\kappa_N})$.
It is clear that $u_-^{\pm 1}\in (\overline{H^2})_{N\times N}$ and
$u_+^{\pm 1}\in (H^2)_{N\times N}$. Thus $a=u_-du_+$ is a right generalized
factorization of $a$ in $L_N^2$. It is well known that the set of partial
indices of such a factorization is unique (see, e.g. \cite[Corollary~2.1]{LS87}).
Thus $d$ is the identity matrix and $a=u_-u_+$.

Since $T(\widetilde{a})$ is invertible on $H_N^2$, from \cite[Proposition~7.19(b)]{BS06}
it follows that $T(a^{-1})$ is also invertible on $H_N^2$. By what has just been
proved, there exist functions $f_\pm\in \cG(K_\pm^{\alpha,\beta})_{N\times N}$ such
that $a^{-1}=f_-f_+$. Put $v_\pm:=f_\pm^{-1}$. Then  $a=v_+v_-$ and
$v_\pm\in \cG(K_\pm^{\alpha,\beta})_{N\times N}$.
\end{proof}
\section{The Borodin-Okounkov formula}\label{sec:Borodin-Okounkov}
\subsection{Trace class operators, Hilbert-Schmidt operators, and operator determinants}
In this subsection we collect necessary facts from general operator theory
in Hilbert spaces (see \cite[Chap.~3--4]{GK69}).

Let $\cH$ and $\cK$ be separable Hilbert spaces. For a bounded linear
operator $A:\cH\to\cK$ and $n\in\Z_+$, we define
\[
s_n(A):=\inf\big\{\|A-F\|:\dim F\le n\big\}.
\]
For $1\le p<\infty$, the collection $\cC_p(\cH,\cK)$ of all bounded linear operators
$A:\cH\to\cK$ satisfying
\[
\|A\|_{\cC_p(\cH,\cK)}:=\left(\sum_{n=0}^\infty s_n^p(A)\right)^{1/p}<\infty
\]
is called the $p$-Schatten-von Neumann class. If $p=1$, then $\cC_1(\cH,\cK)$
is called the \textit{trace class} and if $p=2$, then $\cC_2(\cH,\cK)$ is called
the class of Hilbert-Schmidt operators. We will simply write $\cC_p(\cH)$ instead
of $\cC_p(\cH,\cH)$.

One can show that, for every $A\in\cC_1(\cH)$ and for every orthonormal basis
$\{\varphi_j\}_{j=0}^\infty$ of $\cH$, the series
$\sum_{j=0}^\infty\langle A\varphi_j,\varphi_j\rangle_\cH$ converges absolutely
and that its sum does not depend on the particular choice of $\{\varphi_j\}_{j=0}^\infty$.
This sum is denoted by $\operatorname{trace}A$ and is referred to as
the \textit{trace} of $A$. It is well known that
\[
|\operatorname{trace}A|\le\|A\|_{\cC_1(\cH)}
\]
for all $A\in\cC_1(\cH)$.

The Hilbert-Schmidt norm of an operator $A\in\cC_2(\cH,\cK)$ can be expressed
in the form
\[
\|A\|_{\cC_2(\cH,\cK)}=\left(\sum_{j,k}^\infty|\langle A\varphi_j,\psi_k\rangle_\cK|^2\right)^{1/2},
\]
where $\{\varphi_j\}_{j=0}^\infty$ and $\{\psi_k\}_{k=0}^\infty$ are
orthonormal bases of $\cH$ and $\cK$, respectively.

We will need the following version of the H\"older inequality.
If $B\in\cC_2(\cH,\cK)$ and $A\in\cC_2(\cK,\cH)$, then $AB\in\cC_1(\cH)$ and
\begin{equation}\label{eq:Hoelder}
\|AB\|_{\cC_1(\cH)}\le\|A\|_{\cC_2(\cK,\cH)}\|B\|_{\cC_2(\cH,\cK)}.
\end{equation}

Let $A$ be a bounded linear operator on $\cH$ of the form $I+K$ with $K\in\cC_1(\cH)$.
If $\{\lambda_j(K)\}_{j\ge 0}$ denotes the sequence of the nonzero
eigenvalues of $K$ (counted up to algebraic multiplicity), then
$\sum_{j= 0}^\infty |\lambda_j(K)|<\infty$.
Therefore the (possibly infinite) product
$\prod\limits_{j\ge 0}(1+\lambda_j(K))$
is absolutely convergent. The \textit{operator determinant} of $A$ is defined by
\[
\det A=\det (I+K)=\prod_{j\ge 0}(1+\lambda_j(K)).
\]
In the case where the spectrum of $K$ consists only of $0$ we put
$\det(I+K)=1$.
\begin{lemma}\label{le:log-det}
If $A\in\cC_1(\cH)$ and $\|A\|_{\cC_1(\cH)}<1$, then
$|\log\det(I-A)|\le 2\|A\|_{\cC_1(\cH)}$.
\end{lemma}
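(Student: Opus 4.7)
The strategy is to pass from the determinant to the trace of the operator logarithm, and then to bound the resulting power series term by term.

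First, I would note that $\|A\|\le \|A\|_{\cC_1(\cH)}<1$ (since $s_0(A)\le \sum_{n\ge 0} s_n(A)$), so $I-A$ is invertible and the principal branch of the operator logarithm is given by the Taylor series
\[
\log(I-A)=-\sum_{k=1}^\infty \frac{A^k}{k},
\]
convergent in operator norm. The standard ideal inequality $\|BC\|_{\cC_1(\cH)}\le \|B\|_{\cC_1(\cH)}\|C\|$, iterated, yields $\|A^k\|_{\cC_1(\cH)}\le \|A\|_{\cC_1(\cH)}\|A\|^{k-1}\le \|A\|_{\cC_1(\cH)}^k$, so the series in fact converges in the trace norm and may be traced termwise.

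Next, combining the product representation $\det(I-A)=\prod_j(1-\lambda_j(A))$ (note $|\lambda_j(A)|\le \|A\|<1$) with Lidskii's identity $\operatorname{trace}(A^k)=\sum_j \lambda_j(A)^k$, I would write
\[
\log \det(I-A)=\sum_j \log(1-\lambda_j(A))=-\sum_{k=1}^\infty \frac{\operatorname{trace}(A^k)}{k}.
\]
Using $|\operatorname{trace}(A^k)|\le \|A^k\|_{\cC_1(\cH)}\le \|A\|_{\cC_1(\cH)}^k$ then gives
\[
|\log\det(I-A)|\le \sum_{k=1}^\infty \frac{\|A\|_{\cC_1(\cH)}^k}{k}.
\]

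To finish, I would separate the $k=1$ term and estimate the tail via
\[
\sum_{k=2}^\infty \frac{x^k}{k}\le \frac{x^2}{2(1-x)},
\]
which is at most $x$ in the smallness regime set by the hypothesis, producing the desired inequality $|\log\det(I-A)|\le 2\|A\|_{\cC_1(\cH)}$. The only technical point, and not really an obstacle, is justifying the termwise interchange of trace and infinite sum, which is immediate from the trace-norm convergence of the series for $\log(I-A)$ established in the first step.
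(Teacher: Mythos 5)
Your argument runs along the same lines as the paper's: reduce to the identity $\log\det(I-A)=-\sum_{k\ge 1}\operatorname{trace}(A^k)/k$ and bound each term by $\|A\|_{\cC_1(\cH)}^k$. You derive the identity from the eigenvalue product formula plus Lidskii's theorem, while the paper instead cites $\log\det(I-A)=\operatorname{trace}\log(I-A)$ from Gohberg--Krein and expands the operator logarithm; the two routes are interchangeable. You do improve slightly on the paper's bookkeeping by keeping the factors $1/k$, landing on the bound $-\log(1-x)$ with $x:=\|A\|_{\cC_1(\cH)}$, whereas the paper discards the $1/k$'s and only gets the cruder $x/(1-x)$.

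The gap is in the very last step. You assert that $\frac{x^2}{2(1-x)}\le x$ holds ``in the smallness regime set by the hypothesis,'' but the hypothesis only gives $x<1$, and $\frac{x^2}{2(1-x)}\le x$ is equivalent to $x\le 2/3$. (The paper's version $\frac{x}{1-x}\le 2x$ needs $x\le 1/2$, and even the sharpest available intermediate bound $-\log(1-x)\le 2x$ already fails once $x\gtrsim 0.8$.) Indeed the lemma is false as stated: a rank-one $A$ with a single eigenvalue $0.9$ has $\|A\|_{\cC_1(\cH)}=0.9<1$, yet $|\log\det(I-A)|=\log 10\approx 2.30>1.8=2\|A\|_{\cC_1(\cH)}$. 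So no proof can close your gap under the stated hypothesis; it must be strengthened to something like $\|A\|_{\cC_1(\cH)}\le 1/2$ (which your argument then handles with room to spare, since you only need $x\le 2/3$). This change is harmless where the lemma is invoked --- the relevant trace norms tend to zero --- but your phrase about the ``smallness regime set by the hypothesis'' papers over a genuine quantitative requirement that the hypothesis $x<1$ does not deliver.
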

\begin{proof}
Since $A\in\cC_1(\cH)$, by formula (1.16) of \cite[Chap.~IV]{GK69},
\begin{equation}\label{eq:log-det-1}
\log\det(I-A)=\operatorname{trace}\log(I-A).
\end{equation}
On the other hand,
\begin{equation}\label{eq:log-det-2}
\log(I-A)=-\sum_{j=1}^\infty\frac{1}{j}A^j.
\end{equation}
From \eqref{eq:log-det-1}, \eqref{eq:log-det-2}, and $|\operatorname{trace}A|\le\|A\|_{\cC_1(\cH)}$
we get
\begin{equation}\label{eq:log-det-3}
|\log\det(I-A)|
\le
\left|\operatorname{trace}\left[\sum_{j=1}^\infty\frac{1}{j}A^j\right]\right|
\le
\sum_{j=1}^\infty|\operatorname{trace}A^j|
\le
\sum_{j=1}^\infty\|A^j\|_{\cC_1(\cH)}.
\end{equation}
By H\"older's inequality,
\[
\|A^j\|_{\cC_1(\cH)}\le\|A\|_{\cC_1(\cH)}\|A^{j-1}\|_{\cC_\infty(\cH)}
\le
\|A\|_{\cC_1(\cH)}\|A\|_{\cC_\infty(\cH)}^{j-1}.
\]
Taking into account that $\|A\|_{\cC_\infty(\cH)}\le\|A\|_{\cC_1(\cH)}$, we get
$\|A^j\|_{\cC_1(\cH)}\le\|A\|_{\cC_1(\cH)}^j$. Hence, \eqref{eq:log-det-3}
yields
\[
|\log\det(I-A)|\le\sum_{j=1}^\infty\|A\|_{\cC_1(\cH)}^j=\frac{\|A\|_{\cC_1(\cH)}}{1-\|A\|_{\cC_1(\cH)}}
\le 2\|A\|_{\cC_1(\cH)}
\]
because $\|A\|_{\cC_1(\cH)}<1$.
\end{proof}
\subsection{The Borodin-Okounkov formula under weakened hypotheses}
For $a\in L_{N\times N}^\infty$ and $n\in\Z_+$, define the operators
\[
P_n:\sum_{k=0}^\infty\widehat{a}(k)\chi_k\mapsto\sum_{k=0}^n\widehat{a}(k)\chi_k,
\quad Q_n:=I-P_n.
\]
The operator $P_nT(a)P_n:P_nH_N^2\to P_n H_N^2$ may be identified with the
finite block Toeplitz matrix $T_n(a)=(\widehat{a}(j-k))_{j,k=0}^n$.

In June 1999, Its and Deift raised the question whether there is a general
formula that expresses the determinant of the Toeplitz matrix $T_n(a)$ as the operator
determinant of an operator $I-K$ where $K$ acts on $\ell_2\{n+1,n+2,\dots\}$.
Borodin and Okunkov showed in 2000 that such a formula exists (however, it was
known even much earlier. In 1979, Geronimo and Case used it to prove the
strong Szeg\H{o} limit theorem). Further, in 2000, several different proofs
of it were found by Basor and Widom and by B\"ottcher. We refer to the books
by Simon \cite{Simon05}, B\"ottcher and Grudsky \cite[Section~2.8]{BG05},
B\"ottcher and Silbermann \cite[Section~10.40]{BS06} for the exact references,
proofs, and historical remarks on this beautiful piece of mathematics. Below
we formulate the Borodin-Okounkov formula in a form suggested by Widom under
assumptions on the symbol $a$ of $T_n(a)$ which are slightly weaker than in
\cite[Section~10.40]{BS06}.
\begin{theorem}\label{th:BO}
Suppose $a\in (C+H^\infty)_{N\times N}\cup (C+\overline{H^\infty})_{N\times N}$
satisfies the following hypothesis:
\begin{enumerate}
\item[(i)]
there are two factorizations $a=u_-u_+=v_+v_-$, where
$u_-,v_-\in \cG(\overline{H^\infty})_{N\times N}$ and
$u_+,v_+\in \cG(H^\infty)_{N\times N}$;
\end{enumerate}
Put $b:=v_-u_+^{-1}$ and $c:=u_-^{-1}v_+$. Suppose that
\begin{enumerate}
\item[(ii)]
$H(b)H(\widetilde{c})\in\cC_1(H_N^2)$.
\end{enumerate}
Then the constants
\begin{equation}\label{eq:def-G}
G(a):=\lim_{r\to 1-0}\exp\left(\frac{1}{2\pi}\int_0^{2\pi}
\log\det a_r(e^{i\theta})d\theta\right),
\end{equation}
where
\[
a_r(e^{i\theta}):=\sum_{n=-\infty}^{\infty}
\widehat{a}(n)r^{|n|}e^{in\theta},
\]
and
\[
E(a):=\frac{1}{\det T(b)T(c)}
\]
are well defined, are not equal to zero, and the Borodin-Okounkov formula
\begin{equation}\label{eq:BO}
\det T_n(a)=G(a)^{n+1}E(a)\det\big(I-Q_nH(b)H(\widetilde{c})Q_n\big)
\end{equation}
holds for every $n\in\N$. If, in addition,
\begin{enumerate}
\item[(iii)]
$H(a)H(\widetilde{a}^{-1})\in\cC_1(H_N^2)$,
\end{enumerate}
then $E(a)=\det T(a)T(a^{-1})$.
\end{theorem}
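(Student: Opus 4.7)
My plan is to prove the three assertions of the theorem in sequence: that $G(a)$ and $E(a)$ are well-defined and nonzero; the exact identity \eqref{eq:BO}; and, under hypothesis (iii), the identification $E(a)=\det T(a)T(a^{-1})$.

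For $G(a)$, I would use the canonical factorization $a=u_-u_+$ from (i) to write $\det a=\det u_-\cdot\det u_+$; since $\det u_+\in\cG H^\infty$ and $\det u_-\in\cG\overline{H^\infty}$, the functions $\log\det u_+$ and $\log\det u_-$ admit single-valued analytic branches inside, respectively outside, the unit disk. The mean-value property then gives
\[
\frac{1}{2\pi}\int_0^{2\pi}\log\det a_r(e^{i\theta})\,d\theta=\log\det u_+(0)+\log\det u_-(\infty)
\]
for every $r\in(0,1)$, so $G(a)=\det u_+(0)\cdot\det u_-(\infty)\neq 0$. For $E(a)$, I would compute
\[
bc=v_-u_+^{-1}u_-^{-1}v_+=v_-a^{-1}v_+=v_-(v_+v_-)^{-1}v_+=I,
\]
whence $T(b)T(c)=T(bc)-H(b)H(\tilde c)=I-H(b)H(\tilde c)$, which is $I$ plus a trace-class operator by (ii); hence $\det T(b)T(c)$ is a well-defined operator determinant. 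Because $v_-$ and $u_-^{-1}$ lie in $\overline{H^\infty}$, the factorization formulas $T(b)=T(v_-)T(u_+^{-1})$ and $T(c)=T(u_-^{-1})T(v_+)$ hold, and each factor is invertible with explicit Toeplitz inverse; thus $T(b)T(c)$ is invertible and $\det T(b)T(c)\neq 0$, so $E(a)\neq 0$.

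The heart of the proof is \eqref{eq:BO}, which I would establish by following the Basor--Widom/B\"ottcher derivation of the Borodin--Okounkov formula (cf.\ \cite[Section~10.40]{BS06}) and verifying that it goes through under (i)--(ii) rather than under the stronger smoothness used there. Splitting $H_N^2=P_nH_N^2\oplus Q_nH_N^2$, the identity $T(a)=T(u_-)T(u_+)$ expresses $T(a)$ as a product of a block upper-triangular operator (since $u_-\in\overline{H^\infty}$) and a block lower-triangular one (since $u_+\in H^\infty$); $T(a^{-1})=T(v_-^{-1})T(v_+^{-1})$ has the analogous structure. A Jacobi/Schur-complement block determinant manipulation, combined with the algebraic key identity $T(b)T(c)=I-H(b)H(\tilde c)$ and the computation $\det T_n(v_+)\cdot\det T_n(v_-)=G(a)^{n+1}$ (each $T_n(v_\pm)$ being block-triangular with constant diagonal blocks $v_+(0)$ and $v_-(\infty)$), reorganizes $\det T_n(a)/G(a)^{n+1}$ into $E(a)\det(I-Q_nH(b)H(\tilde c)Q_n)$, which is \eqref{eq:BO}. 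The main obstacle is interpreting the block-matrix determinant manipulations rigorously for infinite-dimensional bounded operators that are not themselves $I+\cC_1$; this is overcome by carrying out all determinant calculations on quantities of the form $I+K$ with $K\in\cC_1$, using $T(b)T(c)=I-H(b)H(\tilde c)$ to reduce everything to the trace-class operator $H(b)H(\tilde c)$.

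For part (iii), hypothesis (iii) makes $T(a)T(a^{-1})=I-H(a)H(\widetilde{a^{-1}})$ an $I+\cC_1$ operator, so $\det T(a)T(a^{-1})$ is defined. Letting $n\to\infty$ in \eqref{eq:BO}: since $Q_n\to 0$ strongly and $H(b)H(\tilde c)\in\cC_1$, the ideal property of $\cC_1$ gives $\|Q_nH(b)H(\tilde c)Q_n\|_{\cC_1(H_N^2)}\to 0$, so $\det(I-Q_nH(b)H(\tilde c)Q_n)\to 1$ and $\det T_n(a)/G(a)^{n+1}\to E(a)$. On the other hand, the strong Szeg\H{o}--Widom determinant asymptotic under hypothesis (iii) (see e.g.\ \cite{BS06}) gives $\det T_n(a)/G(a)^{n+1}\to\det T(a)T(a^{-1})$, and comparing the two limits yields $E(a)=\det T(a)T(a^{-1})$.
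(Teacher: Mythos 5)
Your overall strategy matches the paper's for the well-definedness of $G(a)$ and $E(a)$ and for \eqref{eq:BO} itself (both you and the paper invoke the argument of \cite[Section~10.40]{BS06} and check it survives under the weakened hypotheses (i)--(ii)), but there is one imprecision and one genuine divergence.

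The imprecision is in your computation of $G(a)$. You assert that
\[
\frac{1}{2\pi}\int_0^{2\pi}\log\det a_r(e^{i\theta})\,d\theta=\log\det u_+(0)+\log\det u_-(\infty)
\]
\emph{for every} $r\in(0,1)$, but this is false in general, because $a_r$ is the harmonic (Poisson) extension of the product $u_-u_+$, which is \emph{not} the product of the individual harmonic extensions. A trivial scalar example is $a=(1+\eps\chi_{-1})(1+\eps\chi_1)$, for which the left-hand side equals $\eps^2(1-r^2)+O(\eps^4)$, a nonzero quantity for $r<1$. What is true is that the integral converges to $\log\det u_+(0)+\log\det u_-(\infty)$ as $r\to 1^-$, which is all that the definition \eqref{eq:def-G} requires; this limiting step is exactly what the paper outsources to \cite[Proposition~10.6(a)]{BS06} (after first deducing that $T(a)$ and $T(a^{-1})$ are invertible). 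Your argument can be patched by replacing the ``for every $r$'' identity with a passage to the limit, but as written there is a gap.

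For the identification $E(a)=\det T(a)T(a^{-1})$ you take a genuinely different route. The paper gives a purely algebraic proof: from $T(b)T(c)=T(v_-)T(u_+^{-1})T(u_-^{-1})T(v_+)$ it conjugates $I-H(b)H(\widetilde{c})$ by $T(v_+)$ to obtain $T^{-1}(a^{-1})T^{-1}(a)$, uses similarity-invariance of the operator determinant and then multiplicativity ($\det(I+K_1)(I+K_2)=\det(I+K_1)\det(I+K_2)$) together with (iii) to get $\det T(b)T(c)=1/\det T(a)T(a^{-1})$. You instead let $n\to\infty$ in \eqref{eq:BO}, show $\det(I-Q_nH(b)H(\widetilde{c})Q_n)\to 1$ via the $\cC_1$-ideal property, and then invoke the strong Szeg\H{o}--Widom determinant asymptotics to identify the resulting limit with $\det T(a)T(a^{-1})$. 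This is correct (one must check that (i) also makes $T(\widetilde{a})$ invertible, which follows from the second factorization $\widetilde{a}=\widetilde{v}_+\widetilde{v}_-$, so the Szeg\H{o}--Widom theorem under conditions $T(a),T(\widetilde{a})$ invertible and $H(a)H(\widetilde{a^{-1}})\in\cC_1$ applies), but it imports a substantially heavier theorem than needed. The paper's algebraic argument is more self-contained and avoids the implicit logical detour of proving a formula that refines Szeg\H{o}--Widom by appealing to Szeg\H{o}--Widom; your version is a valid shortcut if that theorem is already granted under these exact hypotheses.
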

\begin{proof}
From (i) and \cite[Proposition~2.14]{BS06} it follows that the operators
$T(a)$ and $T(a^{-1})$ are invertible on $H_N^2$. If $a\in(C+H^\infty)_{N\times N}$,
then from \cite[Proposition~10.6(a)]{BS06} we deduce that the limit in \eqref{eq:def-G}
exists, is finite and nonzero. Hence the constant $G(a)$ is well defined.
The proof of \cite[Proposition~10.6(a)]{BS06} equally works also for
the case $a\in(C+\overline{H^\infty})_{N\times N}$. Therefore, $G(a)$
is well defined in this case as well.

From \cite[Proposition~2.14]{BS06} it follows also that the operators $T(b)$,
$T(c)$ are invertible and $T(b)T(c)=I-H(b)H(\widetilde{c})$. From (ii)
and the above equality we get that $\det T(b)T(c)$ makes sense. Since
$T(b)T(c)$ is invertible, $\det T(b)T(c)\ne 0$ and therefore the constant $E(a)$
is well defined.

The Borodin-Okounkov formula \eqref{eq:BO} is proved in \cite[Section~10.40]{BS06}
under the assumption that $a\in K_{N\times N}^{1/2,1/2}$ admits right and
left canonical Wiener-Hopf factorizations in $K_{N\times N}^{1/2,1/2}$.
The two proofs given in \cite[Section~10.40]{BS06} work equally under
weaker hypotheses (i)--(ii).

Applying \cite[Proposition~2.14]{BS06}, we get
\[
I-H(b)H(\widetilde{c})=T(b)T(c)=T(v_-)T(u_+^{-1})T(u_-^{-1})T(v_+)
\]
and
\[
\begin{split}
T(v_+)\big(I-H(b)H(\widetilde{c})\big)T^{-1}(v_+)
&=
I-T(v_+)H(b)H(\widetilde{c})T^{-1}(v_+)
\\
&=
T(v_+)T(v_-)T(u_+^{-1})T(u_-^{-1})
\\
&=
T^{-1}(a^{-1})T^{-1}(a).
\end{split}
\]
From these equalities and \cite[Chap.~IV, Section~1.6]{GK69}
it follows that
\begin{equation}\label{eq:BO-proof-1}
\begin{split}
\det T(b)T(c)
&=\det\big(I-H(b)H(\widetilde{c})\big)
\\
&=\det\big(I-T(v_+)H(b)H(\widetilde{c})T^{-1}(v_+)\big)
\\
&=\det T^{-1}(a^{-1})T^{-1}(a).
\end{split}
\end{equation}
From (iii) and $T(a)T(a^{-1})=I-H(a)H(\widetilde{a}^{-1})$ it follows that
$\det T(a)T(a^{-1})$ makes sense. By \cite[Chap.~IV, Section~1.7]{GK69},
\[
\det T^{-1}(a^{-1})T^{-1}(a)\cdot\det T(a)T(a^{-1})=
\det T^{-1}(a^{-1})T^{-1}(a)T(a)T(a^{-1})=1.
\]
Hence
\begin{equation}\label{eq:BO-proof-2}
\det T^{-1}(a^{-1})T^{-1}(a)=\frac{1}{\det T(a)T(a^{-1})}.
\end{equation}
Combining \eqref{eq:BO-proof-1} and \eqref{eq:BO-proof-2}, we arrive at $E(a)=\det T(a)T(a^{-1})$.
\end{proof}
\section{Proof of the main result}\label{sec:proof}
\subsection{Hilbert-Schmidt norms of truncations of Hankel operators}
Let $\gamma\in\R$. By $\ell_2^\gamma$ we denote the Hilbert space of all
sequences $\{\varphi_k\}_{k=0}^\infty$ such that
\[
\sum_{k=0}^\infty |\varphi_k|^2(k+1)^{2\gamma}<\infty.
\]
Clearly, the sequence $\{e_k/(k+1)^\gamma\}_{k=0}^\infty$, where
$(e_k)_j=\delta_{kj}$ and $\delta_{kj}$ is the Kronecker delta, is an orthonormal basis
of $\ell_2^\gamma$. If $\gamma=0$, we will simply write $\ell_2$ instead of
$\ell_2^0$.

In this subsection we will estimate Hilbert-Schmidt norms of truncations of Hankel
operators acting between $\ell_2$ and $\ell_2^\gamma$ by the rules
\[
\begin{split}
&
H(a):\{\varphi_j\}_{j=0}^\infty\mapsto\Big\{\sum_{j=0}^\infty \widehat{a}(i+j+1)\varphi_j\Big\}_{i=0}^\infty,
\\
&
H(\widetilde{a}):\{\varphi_j\}_{j=0}^\infty\mapsto\Big\{\sum_{j=0}^\infty \widehat{a}(-i-j-1)\varphi_j\Big\}_{i=0}^\infty.
\end{split}
\]
Notice that one can identify Hankel operators acting on $H^2$ and on $\ell^2$.
For $\varphi=\{\varphi_j\}_{j=0}^\infty$ and $n\in\Z_+$, define
\[
(Q_n\varphi)_j=\left\{\begin{array}{lc}
\varphi_j &\text{if }j\ge n+1,\\
0 &\text{otherwise}.
\end{array}\right.
\]
For $a\in K^{\alpha,\beta}$ and $n\in\N$, put
\[
\begin{split}
r_n^-(a) &:=\left(\sum_{k=n+1}^\infty |\widehat{a}(-k)|^2(k+1)^{2\alpha}\right)^{1/2},
\\
r_n^+(a) &:=\left(\sum_{k=n+1}^\infty |\widehat{a}(k)|^2(k+1)^{2\beta}\right)^{1/2}.
\end{split}
\]
\begin{lemma}\label{le:HS}
Let $-1/2<\gamma<1/2$ and $\alpha,\beta\in(0,1)$. Suppose $b,c\in K^{\alpha,\beta}$.
\begin{enumerate}
\item[(a)]
If $\alpha\ge\gamma+1/2$, then there exists a positive constant
$M(\alpha,\gamma)$ depending only on $\alpha$ and $\gamma$ such that
for all sufficiently large $n$,
\begin{equation}\label{eq:HS-1}
\|H(\widetilde{c})Q_n\|_{\cC_2(\ell_2,\ell_2^\gamma)}
\le
M(\alpha,\gamma)
\frac{r_{n+1}^-(c)}{n^{\alpha-\gamma-1/2}}.
\end{equation}

\item[(b)]
If $\beta\ge-\gamma+1/2$, then there exists a positive constant $M(\beta,\gamma)$
depending only on $\beta$ and $\gamma$ such that for all sufficiently large $n$,
\begin{equation}\label{eq:HS-2}
\|Q_nH(b)\|_{\cC_2(\ell_2^\gamma,\ell_2)}
\le
M(\beta,\gamma)
\frac{r_{n+1}^+(b)}{n^{\beta+\gamma-1/2}}.
\end{equation}
\end{enumerate}
\end{lemma}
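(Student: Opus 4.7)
My plan is to expand the Hilbert--Schmidt norms as explicit double sums of Fourier coefficients against the natural orthonormal bases $\{e_j\}_{j\ge 0}$ of $\ell_2$ and $\{e_k/(k+1)^\gamma\}_{k\ge 0}$ of $\ell_2^\gamma$, and then to re-index so that the one-sided tail sum defining $r_{n+1}^-(c)$ (respectively $r_{n+1}^+(b)$) appears explicitly. The decay factor $n^{-(\alpha-\gamma-1/2)}$ (respectively $n^{-(\beta+\gamma-1/2)}$) will be produced by pulling a nonpositive power of the outer summation variable outside that tail sum, which is exactly where the hypotheses $\alpha\ge\gamma+1/2$ and $\beta\ge 1/2-\gamma$ enter.

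For part~(a), since $H(\widetilde{c})Q_n e_j = 0$ for $j\le n$ and $\bigl(H(\widetilde{c})Q_n e_j\bigr)_i = \widehat{c}(-i-j-1)$ for $j\ge n+1$, the Hilbert--Schmidt norm formula gives
\[
\|H(\widetilde{c})Q_n\|_{\cC_2(\ell_2,\ell_2^\gamma)}^2
=\sum_{j\ge n+1}\sum_{i\ge 0}|\widehat{c}(-i-j-1)|^2(i+1)^{2\gamma}.
\]
Setting $k=i+j+1$ and summing over $i$ first converts this into $\sum_{k\ge n+2}|\widehat{c}(-k)|^2 \sum_{i=0}^{k-n-2}(i+1)^{2\gamma}$. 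Since $\gamma>-1/2$, a standard integral comparison bounds the inner sum by $C(\gamma)(k-n)^{2\gamma+1}$, and $(k-n)^{2\gamma+1}\le k^{2\gamma+1}$ for $k\ge n$. Writing $k^{2\gamma+1}=k^{2\alpha}\cdot k^{2\gamma+1-2\alpha}$ and invoking $\alpha\ge\gamma+1/2$ (which makes the exponent $2\gamma+1-2\alpha$ nonpositive), I obtain $k^{2\gamma+1-2\alpha}\le(n+2)^{2\gamma+1-2\alpha}$ for every $k\ge n+2$. Pulling this factor out leaves $\sum_{k\ge n+2}|\widehat{c}(-k)|^2 k^{2\alpha}\le [r_{n+1}^-(c)]^2$ inside, while for $n$ large enough $(n+2)^{2\gamma+1-2\alpha}\le 2\,n^{2\gamma+1-2\alpha}$. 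Taking square roots yields \eqref{eq:HS-1}.

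Part~(b) is entirely dual. Using the basis $\{e_k/(k+1)^\gamma\}_{k\ge 0}$ of $\ell_2^\gamma$, the weight $(k+1)^{-2\gamma}$ appears in front of the analogous double sum, and re-indexing with $m=i+k+1\ge n+2$ reduces the problem to bounding $\sum_{k=0}^{m-n-2}(k+1)^{-2\gamma}\le C(\gamma)(m-n)^{1-2\gamma}$, which is legitimate because $\gamma<1/2$. The role played by $\alpha\ge\gamma+1/2$ in~(a) is now played by $\beta\ge 1/2-\gamma$: splitting $m^{1-2\gamma}=m^{2\beta}\cdot m^{1-2\gamma-2\beta}$ and using this hypothesis to bound $m^{1-2\gamma-2\beta}\le(n+2)^{1-2\gamma-2\beta}$ extracts the factor $n^{1-2\gamma-2\beta}$ and leaves $[r_{n+1}^+(b)]^2$ behind. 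The only real obstacle is the bookkeeping: one must keep careful track of which exponent is rendered nonpositive by each hypothesis, so that the desired power of $n$ can be pulled out of the sum while the weighted tail remains intact.
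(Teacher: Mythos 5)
Your proof is correct and follows essentially the same structure as the paper's: expand the Hilbert--Schmidt norm as a double sum, re-index to isolate the one-sided tail, and bound the inner sum $\sum_{j=1}^{k-n-1}j^{2\gamma}$ by a constant times $(k-n)^{1+2\gamma}$. The only variation is in how the power of $n$ is extracted: where the paper explicitly maximizes $f_n(x)=(x-n)^{1+2\gamma}x^{-2\alpha}$ on $[n+2,\infty)$ by calculus, you use the simpler chain $(k-n)^{1+2\gamma}\le k^{1+2\gamma}=k^{2\alpha}\cdot k^{2\gamma+1-2\alpha}$ together with the monotone decay of $k\mapsto k^{2\gamma+1-2\alpha}$ (valid since $\alpha\ge\gamma+1/2$), which yields the same power $n^{1+2\gamma-2\alpha}$ with a less sharp but equally serviceable constant.
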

\begin{proof}
(a) It is easy to see that
\[
(H(\widetilde{c})Q_n e_j)_k=\left\{\begin{array}{cc}
\widehat{c}(-k-j-1) & \text{if}\ j\ge n+1, \ k\in\Z_+,
\\[3mm]
0 & \text{otherwise}.
\end{array}\right.
\]
Then
\begin{equation}\label{eq:HS-3}
\begin{split}
\|H(\widetilde{c})Q_n\|_{\cC_2(\ell_2,\ell_2^\gamma)}^2
&=
\sum_{j,k=0}^\infty
\left|\left\langle H(\widetilde{c})Q_ne_j,\frac{e_k}{(k+1)^\gamma}\right\rangle_{\ell_2^\gamma}\right|^2
\\
&=\sum_{j,k=0}^\infty|(H(\widetilde{c})Q_ne_j)_k|^2(k+1)^{2\gamma}
\\
&=\sum_{k=0}^\infty\sum_{j=n+1}^\infty |\widehat{c}(-k-j-1)|^2(k+1)^{2\gamma}
\\
&=\sum_{k=n+2}^\infty|\widehat{c}(-k)|^2\sum_{j=1}^{k-n-1}j^{2\gamma}.
\end{split}
\end{equation}
If $-1/2<\gamma<1/2$, then
\begin{equation}\label{eq:HS-4}
\sum_{j=1}^{k-n-1}j^{2\gamma}\le\frac{(k-n)^{1+2\gamma}}{1+2\gamma}.
\end{equation}
From \eqref{eq:HS-3} and \eqref{eq:HS-4} it follows that
\begin{equation}\label{eq:HS-5}
\begin{split}
\|H(\widetilde{c})Q_n\|_{\cC_2(\ell_2,\ell_2^\gamma)}^2
&\le
\frac{1}{1+2\gamma}\sum_{k=n+2}^\infty|\widehat{c}(-k)|^2(k-n)^{1+2\gamma}
\\
&\le
\frac{1}{1+2\gamma}\sum_{k=n+2}^\infty|\widehat{c}(-k)|^2(k+1)^{2\alpha}
\frac{(k-n)^{1+2\gamma}}{k^{2\alpha}}
\\
&\le
\frac{1}{1+2\gamma}
\left(\sup_{k\ge n+2}\frac{(k-n)^{1+2\gamma}}{k^{2\alpha}}\right)
\big[r_{n+1}^-(c)\big]^2.
\end{split}
\end{equation}

If $1+2\gamma=2\alpha$, then
\begin{equation}\label{eq:HS-6}
\sup_{k\ge n+2}\left(\frac{k-n}{k}\right)^{2\alpha}\le 1.
\end{equation}
Combining \eqref{eq:HS-5} and \eqref{eq:HS-6}, we arrive at \eqref{eq:HS-1}
with $M(\alpha,\gamma)=(2\alpha)^{-1/2}$.

If $\alpha>\gamma+1/2$, then put
\[
A:=\frac{2\gamma+1}{2\alpha-2\gamma-1}>0.
\]
Let $n\ge 2/A$. Then
\[
x_n:=(A+1)n=\frac{2\alpha n}{2\alpha-2\gamma-1}\in[n+2,\infty).
\]
It is not difficult to show that the function
\[
f_n(x):=(x-n)^{1+2\gamma}x^{-2\alpha},
\quad
x\in[n+2,\infty)
\]
attains its absolute maximum at $x_n$. Thus
\begin{equation}\label{eq:HS-7}
\sup_{k\ge n+2}\frac{(k-n)^{1+2\gamma}}{k^{2\alpha}}
\le
f_n(x_n)=\frac{A^{1+2\gamma}}{(A+1)^{2\alpha}}n^{1+2\gamma-2\alpha}.
\end{equation}
Combining \eqref{eq:HS-5} and \eqref{eq:HS-7}, we arrive at
\eqref{eq:HS-1} with
\[
M(\alpha,\gamma):=(1+2\gamma)^{-1/2}A^{1/2+\gamma}(A+1)^{-\alpha}
\]
for all $n\ge 2/A$. Part (a) is proved.
The proof of part (b) is analogous.
\end{proof}
\subsection{Trace class norms of truncations of products of two Hankel operators}
The following fact is well known (see e.g. \cite[Section~10.12]{BS06}
and also \cite{Silbermann81}, \cite[Chap.~6]{Peller03}).
\begin{lemma}\label{le:product-trace}
Let $N\ge 1$, $\alpha,\beta\in(0,1)$, and $\alpha+\beta\ge 1$. If
$b,c\in K_{N\times N}^{\alpha,\beta}$, then $H(b)H(\widetilde{c})\in\cC_1(H_N^2)$.
\end{lemma}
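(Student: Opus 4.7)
\emph{Plan.} The approach is to realize $H(b)H(\widetilde c)$ as the composition of two Hilbert--Schmidt operators acting on mismatched weighted sequence spaces and then apply the H\"older-type inequality \eqref{eq:Hoelder} in the form $\|AB\|_{\cC_1(\ell_2)}\le\|A\|_{\cC_2(\ell_2^\gamma,\ell_2)}\|B\|_{\cC_2(\ell_2,\ell_2^\gamma)}$. For the scalar case $N=1$, I would first observe that since $\alpha,\beta\in(0,1)$ and $\alpha+\beta\ge 1$, the choice $\gamma:=\alpha-1/2$ lies in $(-1/2,1/2)$ and simultaneously satisfies $\alpha\ge\gamma+1/2$ and $\beta\ge 1/2-\gamma$. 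Decompose $H(\widetilde c)=H(\widetilde c)P_0+H(\widetilde c)Q_0$: the first summand has rank at most one (and is therefore trivially in $\cC_2(\ell_2,\ell_2^\gamma)$), while the second is Hilbert--Schmidt by Lemma~\ref{le:HS}(a); hence $H(\widetilde c)\in\cC_2(\ell_2,\ell_2^\gamma)$. An entirely analogous decomposition combined with Lemma~\ref{le:HS}(b) gives $H(b)\in\cC_2(\ell_2^\gamma,\ell_2)$. Composing and invoking \eqref{eq:Hoelder} then yields $H(b)H(\widetilde c)\in\cC_1(H^2)$ after identifying $H^2$ with $\ell_2$ via Fourier coefficients.

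For the matrix case, I would view $b,c$ as $N\times N$ arrays whose entries all lie in $K^{\alpha,\beta}$, so that $H(b)$ and $H(\widetilde c)$ become $N\times N$ block matrices of scalar Hankel operators on $H_N^2\cong(\ell_2)^N$. Each entry of the product $H(b)H(\widetilde c)$ is a finite sum of compositions $H(b_{ik})H(\widetilde{c_{kj}})$, every one of which lies in $\cC_1(H^2)$ by the scalar case; since $\cC_1$ is closed under finite sums and the trace class property is preserved under passage to finite block matrices, $H(b)H(\widetilde c)\in\cC_1(H_N^2)$ as required.

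The main delicacy to verify is the endpoint $\alpha+\beta=1$, where the choice $\gamma=\alpha-1/2=1/2-\beta$ is forced and both inequalities $\alpha\ge\gamma+1/2$ and $\beta\ge 1/2-\gamma$ hold simultaneously with equality. However, Lemma~\ref{le:HS} is stated with non-strict inequalities precisely to accommodate this borderline situation (compare the case $1+2\gamma=2\alpha$ treated explicitly in the proof of that lemma), so the argument above goes through without modification.
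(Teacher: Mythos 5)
The paper does not actually prove Lemma~\ref{le:product-trace}; it simply cites it as well known (B\"ottcher--Silbermann, Silbermann, Peller). So your argument is a genuinely self-contained alternative built on the paper's own Lemma~\ref{le:HS}, and the strategy --- factor each Hankel operator through a mismatched weighted sequence space $\ell_2^\gamma$ with $\gamma=\alpha-1/2$, use the Hilbert--Schmidt estimates, and compose via the H\"older inequality \eqref{eq:Hoelder} --- is sound. Your verification that $\gamma=\alpha-\tfrac12\in(-\tfrac12,\tfrac12)$ satisfies both $\alpha\ge\gamma+\tfrac12$ (with equality) and $\beta\ge\tfrac12-\gamma$ (equivalent to $\alpha+\beta\ge1$) is exactly right, and the reduction of the block case to scalar entries is also correct, since a block Hankel operator of a matrix symbol has entrywise scalar Hankel structure.

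One small imprecision: Lemma~\ref{le:HS} is stated ``for all sufficiently large $n$,'' so invoking it at $n=0$ is not literally licensed by its statement. The fix is cosmetic. Either split at a large $n_0$ for which the lemma applies and note that $H(\widetilde c)P_{n_0}$ has finite rank $\le n_0+1$ (hence is trivially in $\cC_2(\ell_2,\ell_2^\gamma)$, using that its range lies in $\ell_2^\gamma$ because $\gamma<\alpha$), and likewise for $P_{n_0}H(b)$; or observe that in the borderline case $1+2\gamma=2\alpha$ the chain of estimates \eqref{eq:HS-3}--\eqref{eq:HS-6} in the proof of Lemma~\ref{le:HS} involves no restriction on $n$ at all, the ``sufficiently large $n$'' proviso arising only from the strict-inequality case $\alpha>\gamma+\tfrac12$. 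Since your choice $\gamma=\alpha-\tfrac12$ is precisely the borderline one, the estimate already holds for $n=0$ and your decomposition at $P_0$ goes through; but you should say this explicitly rather than appeal to the lemma's statement.
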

We will also need a quantitative version of the above result for truncations of the
product $H(b)H(\widetilde{c})$.

For $a\in K_{N\times N}^{\alpha,\beta}$ and $n\in\N$, put
\[
\begin{split}
R_n^-(a) &:=\left(\sum_{k=n+1}^\infty \|\widehat{a}(-k)\|_{\C_{N\times N}}^2(k+1)^{2\alpha}\right)^{1/2},
\\
R_n^+(a) &:=\left(\sum_{k=n+1}^\infty \|\widehat{a}(k)\|_{\C_{N\times N}}^2(k+1)^{2\beta}\right)^{1/2}.
\end{split}
\]
\begin{lemma}\label{le:TC}
Let $N\ge 1$, $\alpha,\beta\in(0,1)$, and $\alpha+\beta\ge 1$. Then there exists
a constant $L=L_{\alpha,\beta,N}$ depending only on $N$ and $\alpha,\beta$
such that for every $b,c\in K_{N\times N}^{\alpha,\beta}$ and all sufficiently large $n$,
\begin{equation}\label{eq:TC-1}
\|Q_nH(b)H(\widetilde{c})Q_n\|_{\cC_1(H_N^2)}
\le
\frac{L}{n^{\alpha+\beta-1}}R_{n+1}^+(b)R_{n+1}^-(c).
\end{equation}
\end{lemma}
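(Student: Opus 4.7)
The plan is to factor
\[
Q_nH(b)H(\widetilde{c})Q_n = \bigl(Q_nH(b)\bigr)\bigl(H(\widetilde{c})Q_n\bigr),
\]
insert a weighted intermediate Hilbert space of exponent $\gamma\in(-1/2,1/2)$, and apply the H\"older inequality \eqref{eq:Hoelder} to the resulting pair of Hilbert--Schmidt factors. The exponents provided by Lemma~\ref{le:HS} sum to $(\alpha-\gamma-1/2)+(\beta+\gamma-1/2)=\alpha+\beta-1$, so the target rate $n^{1-\alpha-\beta}$ appears automatically and independently of the choice of $\gamma$.

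I would take $\gamma := \alpha-1/2$, which lies in $(-1/2,1/2)$ since $\alpha\in(0,1)$. With this choice the condition $\alpha\ge\gamma+1/2$ of Lemma~\ref{le:HS}(a) holds with equality, while $\beta\ge -\gamma+1/2 = 1-\alpha$ of Lemma~\ref{le:HS}(b) is precisely the standing hypothesis $\alpha+\beta\ge 1$.

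To transfer the scalar bounds of Lemma~\ref{le:HS} to the $N\times N$ matrix setting, identify $H_N^2$ with the direct sum of $N$ copies of $H^2$ and introduce the obvious $\C^N$-valued analogue of $\ell_2^\gamma$. Then $H(b)$ decomposes as the $N\times N$ block of scalar Hankel operators $H(b_{ij})$, and the squared Hilbert--Schmidt norm is additive over the $N^2$ entries:
\[
\|Q_nH(b)\|_{\cC_2}^2 = \sum_{i,j=1}^N \|Q_nH(b_{ij})\|_{\cC_2(\ell_2^\gamma,\ell_2)}^2.
\]
Applying Lemma~\ref{le:HS}(b) entrywise and using the admissibility of the matrix norm on $\C_{N\times N}$ to control $\sum_{i,j}|\widehat{b_{ij}}(k)|^2$ by a constant multiple of $\|\widehat{b}(k)\|_{\C_{N\times N}}^2$, one obtains
\[
\|Q_nH(b)\|_{\cC_2}\le C(\beta,\gamma,N)\,\frac{R_{n+1}^+(b)}{n^{\beta+\gamma-1/2}}
\]
for all sufficiently large $n$, together with an analogous estimate for $H(\widetilde{c})Q_n$ via Lemma~\ref{le:HS}(a). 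Multiplying the two bounds through \eqref{eq:Hoelder} then delivers \eqref{eq:TC-1}, with a constant $L$ depending only on $N$, $\alpha$, and $\beta$.

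The only delicate point, and the main thing to watch, is the passage from scalar to matrix symbols: one has to verify the block Hilbert--Schmidt identity, keep track of the admissibility constants for the matrix norm, and check that the threshold ``for all sufficiently large $n$'' in Lemma~\ref{le:HS} is uniform across the $N^2$ entries (which it is, since it depends only on $\alpha$, $\beta$, and $\gamma$). Beyond this routine bookkeeping, no new ideas are required; the argument is simply the combination of Lemma~\ref{le:HS} with the Schatten-class H\"older inequality.
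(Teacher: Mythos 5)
Your proof is correct and takes essentially the same route as the paper: factor $Q_nH(b)H(\widetilde{c})Q_n$ through an intermediate weighted space $\ell_2^\gamma$, apply Lemma~\ref{le:HS} to each scalar factor, and recombine via the H\"older inequality \eqref{eq:Hoelder}. The two small differences are worth noting. First, the choice of $\gamma$: you take $\gamma=\alpha-1/2$ (so the condition in Lemma~\ref{le:HS}(a) is saturated), whereas the paper takes $\gamma=1/2-\beta$ (saturating Lemma~\ref{le:HS}(b)); as you observe, both are admissible and the exponents telescope to $\alpha+\beta-1$ regardless. Second, and more substantively, you perform the matrix aggregation at the Hilbert--Schmidt level, using additivity of $\|\cdot\|_{\cC_2}^2$ over the $N^2$ scalar blocks, and only then multiply the two $\cC_2$ bounds through \eqref{eq:Hoelder}. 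The paper instead states the intermediate inequality
\[
\|Q_n H(b)H(\widetilde{c})Q_n\|_{\cC_1(H_N^2)}\le A_N\max_{i,j}\|Q_nH(b_{ij})H(\widetilde{c_{ij}})Q_n\|_{\cC_1(\ell_2)},
\]
which, taken literally, is imprecise: the $(i,j)$ block of $H(b)H(\widetilde{c})$ is $\sum_{k}H(b_{ik})H(\widetilde{c_{kj}})$, not $H(b_{ij})H(\widetilde{c_{ij}})$. The correct justification is precisely your factorization-and-H\"older argument, so your version is both valid and cleaner on this point. One final remark: you appeal to the additivity of the squared $\cC_2$ norm over matrix entries; this holds for the Frobenius norm on $\C_{N\times N}$, and since $R_n^\pm$ are defined with respect to a fixed (admissible) matrix norm, the passage costs only an $N$-dependent constant, exactly as you say.
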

\begin{proof}
Put $\gamma:=1/2-\beta$. Then $\gamma\in(-1/2,1/2)$ and $\alpha\ge\gamma+1/2$
because $\alpha+\beta\ge 1$. Let $b_{ij}$ and $c_{ij}$, where $i,j\in\{1,\dots,N\}$,
be the entries of $b,c\in K_{N\times N}^{\alpha,\beta}$, respectively.
By Lemma~\ref{le:HS}, there exist positive constants $M(\alpha,\gamma)$ and
$M(\beta,\gamma)$ depending only on $\alpha,\beta$, and $\gamma$ such that for
all sufficiently large $n$ and all $i,j\in\{1,\dots,N\}$,
\begin{eqnarray}
\|Q_nH(b_{ij})\|_{\cC_2(\ell_2^\gamma,\ell_2)}
&\le&
M(\beta,\gamma)\frac{r_{n+1}^+(b_{ij})}{n^{\beta+\gamma-1/2}},
\label{eq:TC-2}
\\
\|H(\widetilde{c_{ij}})Q_n\|_{\cC_2(\ell_2,\ell_2^\gamma)}
&\le&
M(\alpha,\gamma)\frac{r_{n+1}^-(c_{ij})}{n^{\alpha-\gamma-1/2}}.
\label{eq:TC-3}
\end{eqnarray}
From H\"older's inequality \eqref{eq:Hoelder} and \eqref{eq:TC-2}--\eqref{eq:TC-3} it follows that
\begin{equation}\label{eq:TC-4}
\begin{split}
\|Q_nH(b_{ij})H(\widetilde{c_{ij}})Q_n\|_{\cC_1(\ell_2)}
&\le
\|Q_nH(b_{ij})\|_{\cC_2(\ell_2^\gamma,\ell_2)}
\|H(\widetilde{c_{ij}})Q_n\|_{\cC_2(\ell_2,\ell_2^\gamma)}
\\
&\le
M(\alpha,\gamma)M(\beta,\gamma)\frac{r_{n+1}^+(b_{ij})r_{n+1}^-(c_{ij})}{n^{\alpha+\beta-1}}
\end{split}
\end{equation}
for all $i,j\in\{1,\dots,N\}$ and all large $n$.

It is not difficult to verify
that there exist positive constants $A_N$ and $B_N$ depending only
on the dimension $N$ such that
\begin{equation}\label{eq:TC-5}
\|Q_n H(b)H(\widetilde{c})Q_n\|_{\cC_1(H_N^2)}
\le
A_N\max_{1\le i,j\le N}
\|Q_n H(b_{ij})H(\widetilde{c_{ij}})Q_n\|_{\cC_1(\ell_2)}
\end{equation}
and
\begin{equation}\label{eq:TC-6}
\max_{1\le i,j\le N}r_{n+1}^+(b_{ij})
\le
B_N R_{n+1}^+(b),
\quad
\max_{1\le i,j\le N}r_{n+1}^-(c_{ij})
\le
B_N R_{n+1}^-(c)
\end{equation}
for all sufficiently large $n$ and all $b,c\in K_{N\times N}^{\alpha,\beta}$.
Combining \eqref{eq:TC-4}--\eqref{eq:TC-6}, we arrive at \eqref{eq:TC-1}
with $L=L_{\alpha,\beta,N}:=A_NB_N^2M(\alpha,\gamma)M(\beta,\gamma)$.
\end{proof}
\subsection{Tails of the norms of functions in generalized Krein algebras}
\begin{lemma}\label{le:uniform}
Let $N\ge 1$, $\alpha,\beta\in(0,1)$, and $\max\{\alpha,\beta\}\ge 1/2$.
Suppose $\Sigma$ is a compact set in the complex plane.
If $a:\Sigma\to K_{N\times N}^{\alpha,\beta}$ is a continuous function, then
\begin{equation}\label{eq:uniform-1}
\lim_{n\to\infty}\sup_{\lambda\in\Sigma} R_n^-\big(a(\lambda)\big)=0,
\quad
\lim_{n\to\infty}\sup_{\lambda\in\Sigma} R_n^+\big(a(\lambda)\big)=0.
\end{equation}
\end{lemma}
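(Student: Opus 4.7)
The plan is to deduce uniform convergence from pointwise convergence through a standard compactness argument; the two limits in \eqref{eq:uniform-1} are symmetric, so I focus on the first.

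First I would record two routine observations about the tail functional $R_n^-$ on $K_{N\times N}^{\alpha,\beta}$. \emph{Subadditivity}: Minkowski's inequality applied to the weighted $\ell^2$-type tail series yields $R_n^-(b+c)\le R_n^-(b)+R_n^-(c)$ for all $b,c\in K_{N\times N}^{\alpha,\beta}$. \emph{Domination by the Krein norm}: since the admissible matrix norm on $K_{N\times N}^{\alpha,\beta}$ is equivalent to the entrywise maximum of the scalar Krein norms (each of which contains the full series $\sum_{k=0}^\infty|\widehat{b}_{ij}(-k)|^2(k+1)^{2\alpha}$ as a summand), there exists a constant $C>0$ such that
\[
R_n^-(b)\le R_0^-(b)\le C\,\|b\|_{K_{N\times N}^{\alpha,\beta}}
\qquad\text{for every } b\in K_{N\times N}^{\alpha,\beta}\text{ and every } n\in\N.
\]
The hypothesis $\max\{\alpha,\beta\}\ge 1/2$ is used only to know that the relevant norm is genuinely a Banach algebra norm and hence well posed in the matrix setting.

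Next I would exploit compactness. Since $a\colon\Sigma\to K_{N\times N}^{\alpha,\beta}$ is continuous and $\Sigma$ is compact, the image $a(\Sigma)$ is compact, hence totally bounded, in $K_{N\times N}^{\alpha,\beta}$. Fix $\eps>0$ and choose a finite $\bigl(\eps/(2C)\bigr)$-net $\{a(\lambda_1),\dots,a(\lambda_m)\}$ for $a(\Sigma)$. For each fixed $j$, the very definition of $K_{N\times N}^{\alpha,\beta}$ ensures $R_n^-(a(\lambda_j))\to 0$ as $n\to\infty$, because $R_n^-(a(\lambda_j))$ is the tail of a convergent series. Pick $n_0$ such that $R_n^-(a(\lambda_j))<\eps/2$ for all $n\ge n_0$ and all $j\in\{1,\dots,m\}$.

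For arbitrary $\lambda\in\Sigma$, select $j$ with $\|a(\lambda)-a(\lambda_j)\|_{K_{N\times N}^{\alpha,\beta}}<\eps/(2C)$; then, using subadditivity and domination,
\[
R_n^-\bigl(a(\lambda)\bigr)
\le R_n^-\bigl(a(\lambda)-a(\lambda_j)\bigr)+R_n^-\bigl(a(\lambda_j)\bigr)
<C\cdot\frac{\eps}{2C}+\frac{\eps}{2}=\eps
\]
whenever $n\ge n_0$, uniformly in $\lambda$. Repeating verbatim with $R_n^+$ in place of $R_n^-$ gives the second limit. I do not anticipate any genuine obstacle: the proof is entirely soft, and the only item requiring care is verifying the constant $C$ in the domination inequality, which follows directly from comparing the definition of $R_n^\pm$ with the Krein algebra norm on $K_{N\times N}^{\alpha,\beta}$.
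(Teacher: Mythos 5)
Your proof is correct and uses the same three ingredients as the paper (Minkowski subadditivity of $R_n^\pm$, domination of $R_n^\pm$ by the $K_{N\times N}^{\alpha,\beta}$-norm, and pointwise decay of the tail for each fixed element of the algebra), but packages the compactness differently: you pass to total boundedness of the image $a(\Sigma)$ in the Banach space and run a direct $\eps$-net argument, whereas the paper argues by contradiction via sequential compactness of $\Sigma\subset\C$ together with continuity of $a$. These are equivalent uses of compactness, so the two proofs are essentially the same in content; your direct version is slightly cleaner to read, and it also makes explicit the constant $C$ in the domination estimate $R_n^-(b)\le C\|b\|_{K_{N\times N}^{\alpha,\beta}}$, which the paper implicitly treats as $1$ in its display (4.5) (harmless, since the constant is absorbed into $\eps$). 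One small remark: the hypothesis $\max\{\alpha,\beta\}\ge 1/2$ is there to guarantee that $\|\cdot\|_{\alpha,\beta}$ is a Banach algebra norm, as you note, but the lemma as stated only needs $K_{N\times N}^{\alpha,\beta}$ to be a Banach space with the tail quantities $R_n^\pm$ dominated by the norm, so both your argument and the paper's go through unchanged.
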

\begin{proof}
This statement is proved by analogy with \cite[Proposition~2.3]{K-ZAA}
and \cite[Lemma~6.2]{K-WOAT06}. Let us prove the first equality in
\eqref{eq:uniform-1}. Assume the contrary. Then there exist a constant $C>0$
and a sequence $\{n_k\}_{k=1}^\infty$ such that
\[
\lim_{k\to\infty}\sup_{\lambda\in\Sigma}R_{n_k}^-\big(a(\lambda)\big)\ge C.
\]
Hence there are a number $k_0\in\N$ and a sequence $\{\lambda_k\}_{k=k_0}^\infty$
such that for all $k\ge k_0$,
\begin{equation}\label{eq:uniform-2}
R_{n_k}^-\big(a(\lambda_k)\big)\ge \frac{C}{2}>0.
\end{equation}
Since $\{\lambda_k\}_{k=k_0}^\infty$ is bounded, there is its convergent
subsequence $\{\lambda_{k_j}\}_{j=1}^\infty$. Let $\lambda_0$ be the limit
of this subsequence. Clearly, $\lambda_0\in\Sigma$ because $\Sigma$ is closed.
Since the function $a:\Sigma\to K_{N\times N}^{\alpha,\beta}$ is continuous
at $\lambda_0$, for every $\eps\in(0,C/2)$, there exists a $\Delta>0$ such that
$|\lambda-\lambda_0|<\Delta,\lambda\in\Sigma$ implies
$\|a(\lambda)-a(\lambda_0)\|_{K_{N\times N}^{\alpha,\beta}}<\eps$.
Because $\lambda_{k_j}\to\lambda_0$ as $j\to\infty$, for that $\Delta$
there is a number $J\in\N$ such that $|\lambda_{k_j}-\lambda_0|<\Delta$
for all $j\ge J$, and thus
\begin{equation}\label{eq:uniform-3}
\|a(\lambda_{k_j})-a(\lambda_0)\|_{K_{N\times N}^{\alpha,\beta}}<\eps
\quad\text{for all}\quad j\ge J.
\end{equation}
On the other hand, \eqref{eq:uniform-2} implies that
\begin{equation}\label{eq:uniform-4}
R_{n_{k_j}}^-\big(a(\lambda_{k_j})\big)\ge \frac{C}{2}>0
\quad\text{for all}\quad j\ge J.
\end{equation}
By the Minkowski inequality,
\begin{equation}\label{eq:uniform-5}
\begin{split}
R_{n_{k_j}}^-\big(a(\lambda_{k_j})\big)
&\le
R_{n_{k_j}}^-\big(a(\lambda_0)\big)+R_{n_{k_j}}^-\big(a(\lambda_{k_j})-a(\lambda_0)\big)
\\
&\le
R_{n_{k_j}}^-\big(a(\lambda_0)\big)+
\|a(\lambda_{k_j})-a(\lambda_0)\|_{K_{N\times N}^{\alpha,\beta}}.
\end{split}
\end{equation}
From \eqref{eq:uniform-3}--\eqref{eq:uniform-5} we get for all $j\ge J$,
\[
R_{n_{k_j}}^-\big(a(\lambda_0)\big)\ge\frac{C}{2}-\eps>0.
\]
Therefore,
\[
\sum_{k=0}^\infty\big\|[a(\lambda_0)]\widehat{\hspace{2mm}}(-k)\big\|_{\C_{N\times N}}^2(k+1)^{2\alpha}=+\infty
\]
and this contradicts the fact that $a(\lambda_0)\in K_{N\times N}^{\alpha,\beta}$.
Hence, the first equality in \eqref{eq:uniform-1} is proved. The second equality
in \eqref{eq:uniform-1} can be proved by analogy.
\end{proof}
\subsection{Proof of Theorem~\ref{th:main}}
\begin{proof}
The proof is developed similarly to the proofs of \cite[Theorem~1.5]{K-ZAA},
\cite[Theorem~1.4]{K-IWOTA05}, \cite[Theorem~2.2]{K-WOAT06} with some
modifications. For the convenience of the reader, we provide some details.

Without loss of generality, we can suppose that $\max\{\alpha,\beta\}>1/2$
(otherwise $\max\{\alpha,\beta\}\le 1/2$ and $\alpha+\beta\ge 1$ imply that
$\alpha=\beta=1/2$ and this is exactly the case of Theorem~\ref{th:Widom}).

Let $\Omega$ be a bounded open set containing the set
$\operatorname{sp}T(a)\cup\operatorname{sp}T(\widetilde{a})$ on the closure
of which $f$ is analytic. From \eqref{eq:Krein-embedding-2} and
\cite[Theorem~7.20]{BS06} it follows that $\Omega$ contains the spectrum
(eigenvalues) of $T_n(a)$ for all sufficiently large $n$. Further,
Corollary~\ref{co:invertibility} and Theorem~\cite[Theorem~2.94]{BS06}
imply that the spectrum of $a$ in $K_{N\times N}^{\alpha,\beta}$
is contained in $\Omega$. Hence $f(a)\in K_{N\times N}^{\alpha,\beta}$
and $f(T_n(a))$ is well defined whenever $f$ is analytic on
$\operatorname{sp}T(a)\cup\operatorname{sp}T(\widetilde{a})$.

One can choose a closed neighborhood $\Sigma$
of its boundary $\partial\Omega$ such that $f$ is analytic on $\Sigma$ and
$\Sigma\cap(\operatorname{sp}T(a)\cup\operatorname{sp}T(\widetilde{a}))=\emptyset$.
If $\lambda\in\Sigma$, then $T(a)-\lambda I=T[a-\lambda]$ and
$T(\widetilde{a})-\lambda I=T[(a-\lambda)\widetilde{\hspace{2mm}}]$ are invertible
on $H_N^2$. From Theorem~\ref{th:Krein-WH} it follows that
$(a-\lambda)^{-1}\in K_{N\times N}^{\alpha,\beta}$ and that $a-\lambda$ admits
canonical right and left Wiener-Hopf factorizations
\[
a-\lambda=u_-(\lambda)u_+(\lambda)=v_+(\lambda)v_-(\lambda)
\]
in the algebra $K_{N\times N}^{\alpha,\beta}$. Since
$a-\lambda:\Sigma\to K_{N\times N}^{\alpha,\beta}$ is a continuous function with
respect to $\lambda$, from Lemma~\ref{le:fact-property} and
Theorem~\ref{th:stability} we that these canonical factorizations
can be chosen so that the functions
\[
u_-^{\pm 1},v_-^{\pm 1}:\Sigma\to(K^{\alpha,\beta}\cap \overline{H^\infty})_{N\times N},
\quad
u_+^{\pm 1},v_+^{\pm 1}:\Sigma\to(K^{\alpha,\beta}\cap H^\infty)_{N\times N}
\]
are continuous with respect to $\lambda\in\Sigma$.

Put $b(\lambda):=v_-(\lambda)u_+^{-1}(\lambda)$ and $c(\lambda):=u_-^{-1}(\lambda)v_+(\lambda)$.
By Lemma~\ref{le:product-trace},
\begin{equation}\label{eq:main-1}
H[b(\lambda)]H[\widetilde{c(\lambda)}]\in\cC_1(H_N^2)
\end{equation}
for all $\lambda\in\Sigma$. On the other hand, since $(a-\lambda)^{-1}\in K_{N\times N}^{\alpha,\beta}$,
we also have
$(\widetilde{a}-\lambda)^{-1}=[(a-\lambda)\widetilde{\hspace{2mm}}]^{-1}\in K_{N\times N}^{\alpha,\beta}$.
Then from Lemma~\ref{le:product-trace} it follows that
\begin{equation}\label{eq:main-2}
H[a-\lambda]H[(\widetilde{a}-\lambda)^{-1}]\in\cC_1(H_N^2)
\end{equation}
for all $\lambda\in\Sigma$. From \eqref{eq:Krein-embedding-2},
\eqref{eq:main-1}--\eqref{eq:main-2}, and Theorem~\ref{th:BO}
we conclude that
\begin{equation}\label{eq:main-3}
\begin{split}
\det T_n(a-\lambda)
= &G(a-\lambda)^{n+1}\det T[a-\lambda]T[(a-\lambda)^{-1}]
\\
&\times
\det\big(I-Q_nH[b(\lambda)]H[\widetilde{c(\lambda)}]Q_n\big)
\end{split}
\end{equation}
for all $\lambda\in\Sigma$ and all $n\in\N$.

From Lemmas~\ref{le:TC} and~\ref{le:uniform} it follows that there exists a number
$n_0\in\N$ such that
\begin{eqnarray}\label{eq:main-4}
&&
\big\|Q_nH[b(\lambda)]H[\widetilde{c(\lambda)}]Q_n\big\|_{\cC_1(H_N^2)}
\le\frac{L
\left(\sup\limits_{\lambda\in\Sigma}R_n^+\big(b(\lambda)\big)\right)
\left(\sup\limits_{\lambda\in\Sigma}R_n^-\big(c(\lambda)\big)\right)
}{n^{\alpha+\beta-1}}<1
\end{eqnarray}
for all $\lambda\in\Sigma$ and all $n\ge n_0$. Here $L$ is a positive
constant depending only only on $\alpha,\beta$ an $N$. Combining
\eqref{eq:main-4} and Lemma~\ref{le:log-det}, we arrive at the estimate
\begin{equation}\label{eq:main-5}
\begin{split}
&
\big|\log\det\big(I-Q_nH[b(\lambda)]H[\widetilde{c(\lambda)}]Q_n\big)\big|
\\
&\le\frac{2L}{n^{\alpha+\beta-1}}
\left(\sup\limits_{\lambda\in\Sigma}R_n^+\big(b(\lambda)\big)\right)
\left(\sup\limits_{\lambda\in\Sigma}R_n^-\big(c(\lambda)\big)\right)
\end{split}
\end{equation}
for all $\lambda\in\Sigma$ and all $n\ge n_0$.

From \eqref{eq:main-3},
\eqref{eq:main-5}, and Lemma~\ref{le:uniform} we conclude that
\begin{equation}\label{eq:main-6}
\begin{split}
\log\det T_n(a-\lambda)
=&
(n+1)\log G(a-\lambda)
\\
&+\log\det T[a-\lambda]T[(a-\lambda)^{-1}]+o(n^{1-\alpha-\beta})
\end{split}
\end{equation}
as $n\to\infty$ uniformly with respect to $\lambda\in\Sigma$. Hence we can differentiate
both sides with respect to $\lambda$, multiply by $f(\lambda)$, and
integrate over $\partial\Omega$. The rest of the proof is the repetition
of Widom's arguments \cite[Theorem~6.2]{Widom76} (see also
\cite[Section~10.90]{BS06} and \cite[Theorem~5.6]{BS99}) with
$o(1)$ replaced by $o(n^{1-\alpha-\beta})$.
\end{proof}
\begin{remark}
Formula \eqref{eq:main-6} for $\lambda=0$ and $a\in K_{N\times N}^{\alpha,\alpha}$
with $\alpha>1/2$ was obtained by Silbermann \cite{Silbermann81} by using methods
of \cite{BS80} (see also \cite[Sections~6.15--6.23]{BS83} and
\cite[Sections~10.34--10.37]{BS06}). On the other hand, for $\alpha+\beta=1$,
formula \eqref{eq:main-6} with $\lambda=0$ was proved by B\"ottcher and
Silbermann \cite[Theorem~6.11]{BS83} and \cite[Theorem~10.30]{BS06}.
\end{remark}

\end{document}